\newtheorem{theorem}{Theorem}[section]
\newtheorem{corollary}[theorem]{Corollary}
\newtheorem{lemma}[theorem]{Lemma}
\newtheorem{definition}[theorem]{Definition}
\newtheoremstyle{normalnoit}{}{}{}{ }{\bf }{.}{ }{}
\theoremstyle{normalnoit}
\newtheorem{example}[theorem]{Example}
\newtheorem{remark}[theorem]{Remark}
\def\invddots{\mathinner{\mskip1mu\raise1pt\vbox{\kern7pt\hbox{.}}\mskip2mu
        \raise4pt\hbox{.}\mskip2mu\raise7pt\hbox{.}\mskip1mu}}
\def\dddots{\mathinner{\mskip1mu\raise4pt\vbox{\kern7pt\hbox{.}}\mskip2mu
        \raise3pt\hbox{.}\mskip2mu\raise1pt\hbox{.}\mskip1mu}}
\newfont{\ffa}{cmbx12}
\newcommand{\be}{\begin{equation}}
\newcommand{\ee}{\end{equation}}
\newcommand{\bs}{\begin{subequations}}
\newcommand{\es}{\end{subequations}}
\newcommand{\bt}{\begin{theorem}}
\newcommand{\et}{\end{theorem}}
\newcommand{\bd}{\begin{definition}}
\newcommand{\ed}{\end{definition}}
\newcommand{\ba}{\left[ \begin{array}}
\newcommand{\ea}{\\ \end{array} \right]}
\newcommand{\diag}{\mathrm{diag}}
\author{Sirani M. Perera, Grigory Bonik and Vadim Olshevsky}
\title{\textbf{A Fast Algorithm for the Inversion of Quasiseparable Vandermonde-like Matrices}}
\date{}
\begin{document}


\maketitle

\begin{abstract}
The results on Vandermonde-like matrices were introduced as a generalization of polynomial Vandermonde matrices, and the displacement structure of these matrices was used to derive an inversion formula. In this paper we first present a fast Gaussian elimination algorithm for the polynomial Vandermonde-like matrices. Later we use the said algorithm to derive fast inversion algorithms for quasiseparable, semiseparable and well-free Vandermonde-like matrices having $\mathcal{O}(n^2)$ complexity. To do so we identify structures of displacement operators  in terms of generators and the recurrence relations(2-term and 3-term) between the columns of the basis transformation matrices for quasiseparable, semiseparable and well-free polynomials. Finally we present an $\mathcal{O}(n^2)$ algorithm to compute the inversion of quasiseparable Vandermonde-like matrices.
\end{abstract}


\section{Introduction}
Structure generalization of the classical Vandermonde matrix $V(x)=[x_i^{j-1}]$ is the polynomial Vandermonde matrix of the form 
\be 
  V_Q(x) = \left[
        \begin{array}{cccc}
            Q_0(x_1) & Q_1(x_1) & \cdots & Q_{n-1}(x_1) \\
            Q_0(x_2) & Q_1(x_2) & \cdots & Q_{n-1}(x_2) \\
            \vdots   & \vdots   &        & \vdots       \\
            Q_0(x_n) & Q_1(x_n) & \cdots & Q_{n-1}(x_n)
        \end{array}
        \right]
\label{eq:vq}
\ee
where $x=[x_1,x_2,\cdots,x_n]$ and the set of polynomials $Q=\{Q_0(x), Q_1(x), \cdots, Q_{n-1}(x)\}$ satisfies deg $Q_k(x)=k$. Like classical and polynomial Vandermonde matrices, Toeplitz $[t_{i-j}]$, Hankel $[h_{i+j-2}]$, Toeplitz-plus-Hankel, Cauchy $\left[\frac{1}{x_i-y_j} \right]$, Pick matrices, etc. have structures. Once the structure is considered these matrices can be used in many applications, e.g. solving systems of questions, calculating Gaussian quadrature, theories in interpolations and approximations. All of the above structured matrices have low displacement rank, and hence these matrices are said to be $\textit{like}$ matrices in displacement structure theory. This low displacement rank property was first introduced by \cite{KKM79} for Toeplitz matrices and later it was recognized by \cite{HR84} that this property is common for all the other mentioned matrices as well.    
\newline\newline
The Structure-ignoring approach of Gaussian elimination for the inversion of polynomial Vandermonde matrices $V_Q(x)$  costs $\mathcal{O}(n^3)$ operations. Once the structure of $V_Q(x)$ or recurrence relations of the polynomial system $\{Q\}$ is considered, the resulting algorithm is cheaper and costs only $\mathcal{O}(n^2)$ operations. Table \ref{tbl:oldwrk} shows the previous work in deriving such fast inversion formulas, inversion algorithms, and algorithms for solving linear systems corresponding to the class of polynomial Vandermonde matrices.    
\begin{table}[ht]
\begin{center}
\begin{tabular}{|l|l|l|l|l|}
\hline
Vandermonde & Polynomial System $Q$    & $\mathcal{O}(n^2)$  &  $\mathcal{O}(n^2)$   & $\mathcal{O}(n^2)$  \\ 
     Matrix $V_Q(x)$                   &                       & inversion & inversion & system \\

                        &                       & formula & algorithm & solver \\
\hline
        Classical-V                      &   monomials  &   P \cite{P64}, Tr \cite{T66}, GO \cite{GO97}  &  P \cite{P64}, Tr\cite{T66} & BP\cite{BP70} \\ 
\hline        
        Chebychev-V                      & Chebychev poly           & GO \cite{GO94}  &GO \cite{GO94} & RO\cite{RO91}\\ 
\hline        
        Three-Term-V                     & Real orthogonal poly           & Vs \cite{V88}, GO \cite{GO94} & CR \cite{CR93} & Hi\cite{H90}\\ 
\hline        
        Szeg\"o--V & Szeg\"o polynomial  & O \cite{O98} & O \cite{O01} & BEGKO \cite{BEGKO07} \\ 
\hline        
        Quasiseparable                  & Quasiseparable  & BEGOT \cite{BEGOT} & BEGOT \cite{BEGOT12},& BEGKO \cite{BEGKO09}\\ 
          Vandermonde        & polynomial & BEGOT \cite{BEGOT13} & BEGOTZ \cite{BEGOTZ10} & \\ 
\hline
\end{tabular}
\end{center}
\caption{Fast $\mathcal{O}(n^2)$ inversion for polynomial--Vandermonde
matrices.}\label{tbl:oldwrk}
\end{table}
\newline\newline
Inversion formulas and fast system solving are classical applications in Displacement Theory.  Thus it was natural to derive formulas and algorithms not only for polynomial Vandermonde matrices but also for polynomial Vandermonde-like matrices. These inversion and system solving results for {\it like}-matrices can be found in see e.g. \cite{HR84}, \cite{GO942}, \cite{GO97}, \cite{KO95}, \cite{KO97}, \cite{KS95} mentioned quite a few.  
\subsection{Displacement equations and Vandermonde-like matrices}
The low rank displacement property or ``{\it{like}}" idea allowed one to nicely unify and extend the results of polynomial Vandermonde matrices to polynomial Vandermonde-like matrices while preserving displacement structure under inversion. 
\newline\newline
Let's start this section with the definition of displacement equation and the rank of the displacement operator.
\bd
A linear displacement operator $\Theta_{\Omega ,M ,F,N}(.) : C^{n\times n}   \rightarrow  C^{n\times n}$ is a function which transforms each matrix $R \in C^{n\times n}$ to its displacement equation 
\be
\Theta_{\Omega ,M ,F,N}(R)=\Omega \:R\:M -F\:R\:N=G\:B 
\label{eq:dko}
\ee
where $\Omega ,M ,F,N \in C^{n\times n}$ are given matrices and $G \in C^{n\times \alpha},\:B \in C^{\alpha\times n}$. The pair $\{G, B\}$ on last right in (\ref{eq:dko}) is called a minimal generator of $R$ and
\be
rank \:\left\{ \Theta_{\Omega ,M ,F,N}(R) \right \}=\alpha. 
\label{rnk}
\ee
\ed
\begin{example}
Toeplitz matrix $T=[t_{i-j}]_{1 \leq i, j \leq n}$ satisfies the displacement equation
\be
\begin{aligned}
 T-Z_0 \cdot T \cdot Z_0^T & = \begin{bmatrix}
t_0 &t_{-1}  &\cdots  &t_{-n+1} \\ 
t_1 &0  & \cdots &0 \\ 
 \vdots&\vdots  &\ddots  & \vdots\\ 
 t_{n-1}&0  &\cdots  & 0
\end{bmatrix}
&\\=& \left[\begin{array}{cc}
\frac{t_0}{2} & 1\\ 
 t_1& 0\\ 
 \vdots& \vdots\\ 
 t_{n-1}&0 
\end{array} \right] \left[\begin{array}{llll}
1 &0  & \cdots & 0\\ 
\frac{t_0}{2} &t_{-1}  &\cdots  &t_{-n+1} 
\end{array}\right]
\end{aligned}
\label{tde}
\ee
where $Z_0$ is a lower shift matrix. Following (\ref{tde}), $\textrm{rank}\left \{ \Theta_{I, I, Z_0, Z_0^T}(T) \right \} = 2$.
\end{example}
\begin{example} 
Hankel matrix $H=[h_{i+j-2}]_{1 \leq i,j \leq n}$ satisfies the displacement equation
\be
\begin{aligned}
Z_0 \cdot H-H \cdot Z_0^T & =  \left[
\begin{array}{ccccc} h_0 & h_1 & h_2 & \cdots & h_{n-1}\\ 
h_1 & h_2 & h_3 & \cdots & h_n \\ \vdots &  &  &  &\vdots\\
h_{n-1} & h_n & h_{n+1} & \cdots & h_{2n-2} 
\end{array} \right] \nonumber \\ 
& =  \left[
\begin{array}{cc} 1 & 0\\ 
0 & h_0 \\ \vdots & \vdots\\ 0 & h_{n-2} 
\end{array} \right]
\left[
\begin{array}{rrrr}
0 & -h_0 & \cdots & -h_{n-2}\\ 
1 & 0 & \cdots & 0 \end{array} \right]
\end{aligned}
\label{hde}
\ee
where $Z_0$ is a lower shift matrix. Following (\ref{hde}), $\textrm{rank}\left \{ \Theta_{Z_0, I, I, Z_0^T}(H) \right \} = 2$.
\end{example}
\noindent Next we introduce a displacement operator of the polynomial Vandermonde-like matrices using a recurrence relation of the polynomials. 
\subsubsection{Displacement operator based on recurrence relations}
Let $Q = \{Q_0(x), Q_1(x), \ldots, Q_{n-1}(x)\}$ with $\deg Q_k = k$ be a system of $n$ polynomials satisfying recurrence relations:
\begin{align}
Q_0(x) &= \tau_0 \notag \\
Q_k(x) &= \tau_k \cdot x \cdot Q_{k-1}(x) - a_{k-1,\:k}Q_{k-1}(x) - a_{k-2,\:k}Q_{k-2}(x) - \cdots - a_{0,\:k}Q_{0}(x) 
\label{generalrecur}
\end{align}
for some coefficients $\{\tau_k\}$ and $\{a_{jk}\}$. Given recurrence relations (\ref{generalrecur}), we can define two
upper triangular matrices
\begin{equation}
M_Q=\left[
    \begin{array}{ccccc}
           1 & a_{0,1} & a_{0,2}   & \cdots &   a_{0,n-1} \\
           0 &       1 & a_{1,2} & \ldots &   a_{1,n-1} \\
      \vdots &  \ddots &  \ddots &   \ddots     &      \vdots \\
      \vdots &         &  \ddots & \ddots & a_{n-2,n-1} \\
           0 &  \cdots &  \cdots &      0 &           1
    \end{array}
\right],
\quad
N_Q=\left[
    \begin{array}{ccccc}
         0 & \tau_1 &        0 & \cdots &            0 \\
         0 &        0 & \tau_2 & \ddots &            0 \\
         0 &        0 &        0 & \ddots &       \vdots \\
    \vdots &          &          & \ddots & \tau_{n-1} \\
         0 &   \cdots &   \cdots &      0 &            0
    \end{array}
\right].
\label{eq:mn}
\end{equation}
Following \cite{KO97} one can show that the polynomial Vandermonde matrix $V_Q(x)$ satisfies the \emph{displacement equation}:
\begin{equation} \label{disp1v}
V_Q(x) \cdot M_Q - D_x \cdot V_Q(x) \cdot N_Q =
\left[\begin{array}{c}
1 \\
1 \\
\vdots \\
1 \\
 \end{array}\right]
\cdot
\left[\begin{array}{cccc}
\tau_0 &
0 &
\cdots &
0
 \end{array}\right],
\end{equation}
where $D_x = \diag(x_1,x_2,\ldots,x_n)$. Note that the matrix on the right-hand side of equation (\ref{disp1v})
is of rank one. Thus when the system of polynomials $\{Q\}$ satisfies the recurrence relations (\ref{generalrecur}), the class of polynomial Vandermonde matrices can be generalized by allowing the displacement rank to be one. We say that matrix $R$ is \emph{Vandermonde-like} if it satisfies the displacement equation:
\begin{equation} \label{disp1}
R \cdot M_Q - D_x \cdot R \cdot N_Q = \tilde{G}\cdot \tilde{B},
\end{equation}
where $\tilde{G} \in \mathbb{C}^{n \times \alpha}$,
      $\tilde{B} \in \mathbb{C}^{\alpha \times n}$
and $\alpha$ is the \emph{displacement rank} of matrix $R$ which is small compared to $n$. By \cite{KO97}, one can see that the matrix $R$ is uniquely identified by its displacement equation. 
\newline\newline
Throughout our discussion we will mostly use another form of a displacement operator $W_Q$ which is stated as follows.
\newline\newline
If all $x_1,x_2,\ldots,x_n$ are nonzero, then $D_x$ is invertible and
$D_x^{-1} = D_{\frac{1}{x}} = \diag \left (\frac{1}{x_1},\frac{1}{x_1},\ldots,\frac{1}{x_n} \right)$.
Multiplying equation (\ref{disp1}) by $D_\frac{1}{x}$ from the left
and $M_Q^{-1}$ from the right, we get
\begin{equation}
\label{eq:dispwq}
D_{\frac{1}{x}} \cdot R - R \cdot W_Q = G \cdot B,
\end{equation}
where $G = D_\frac{1}{x}\tilde{G}$, $B = \tilde{B} M_Q^{-1}$, and
\begin{equation} 
\label{eq:wq} 
W_Q = N_Q M_Q^{-1}. 
\end{equation}
\subsection{Generalized associated polynomials}
The Traub algorithm \cite{P64,T66} computes the entries of $V_Q(x)^{-1}$ in $\mathcal{O}(n^2)$ operations. This algorithm was derived by using the properties of ${\it associated \:polynomials \:(Horner\: polynomials)}$. These generalized associated polynomials can be defined as follows.
\newline\newline
Let $Q = \{Q_0(x), \ldots, Q_{n-1}(x)\}$ be a system of polynomials
satisfying recurrence relations (\ref{generalrecur}).
We define a system $\widehat{Q}=\{\widehat{Q}_0(x),\ldots,\widehat{Q}_{n-1}(x)\}$
of \emph{generalized associated polynomials} by

\begin{align}
\widehat{Q}_0(x) &= \widehat{\tau}_0, \notag \\
\widehat{Q}_k(x) &= \widehat{\tau}_k \cdot x \cdot \widehat{Q}_{k-1}(x)
        - \widehat{a}_{k-1, k}\widehat{Q}_{k-1}(x)- \widehat{a}_{k-2, k}\widehat{Q}_{k-2}(x) - \cdots \widehat{a}_{0, k}\widehat{Q}_{0}(x) 
\end{align}
with
\begin{align}
\widehat{\tau}_k &= \tau_{n-k}, \qquad k=0,\ldots,n-1   \notag \\
\widehat{a}_{j, k} &= \frac{\tau_{n-k}}{\tau_{n-j}} a_{n-k,n-j},
     \qquad k=1,\ldots,n-1, \quad j=0,\ldots,k-1.
    \label{eq:hata}
\end{align}
We will see that the generalized associated polynomials determine the structure of $R^{-1}$ where $R$ in our case is the polynomial Vandermonde-like matrices.
\subsection{Inversion formula}
By \cite{KO97}, a general inversion formula for Vandermonde-like
matrices satisfying the displacement equation (\ref{eq:dispwq}) is given by:
\begin{equation}
\label{eq:invformula}
R^{-1} = \tilde{I} \cdot \sum_{i=1}^{\alpha} 
    (\sum_{k=1}^{n} d_{ik} (W_{\widehat{Q}}^T)^{k-1}) \cdot
    V_{\widehat{Q}}^T \cdot \diag(c_i),
\end{equation}
where
\begin{equation}
\label{eq:cidik}
\left[\begin{matrix}c_1 & c_2 & \cdots & c_{\alpha}\end{matrix}\right] = 
D_x R^{-T} B^T, \quad  
\left[\begin{matrix} d_{ik} \end{matrix}\right] = G^TR^{-T}\tilde{I}(S_{P\widehat{Q}})^{-1}
\end{equation}
Here $S_{P\widehat{Q}}$ is the basis transformation matrix for passing from the basis of generalized associated polynomials $\widehat{Q}$ to
the power basis $P = \{1, x, \ldots, x^{n-1}\}$.
In the general case, this formula has complexity of $\mathcal{O}(\alpha n^3)$
which is not any better than the complexity of the Gaussian elimination
algorithm.
However, we will see that in the case when polynomials $Q$
satisfies the recurrence relations, this formula gives
the complexity of $\mathcal{O}(\alpha n^2)$ operations. To implement
this formula, we need the following three components:

\begin{enumerate}
\item To compute $c_i$ and $d_{ik}$, we need to solve $\alpha$
linear systems with matrix $R$. This problem will be addressed in sections
\ref{sec:gepp} and \ref{sec:gen}.

\item To compute $S_{P\widehat Q}$, we need to find recurrence relations
for generalized associated polynomials $\widehat Q$ and then use
them to calculate columns of $S_{P \widehat Q}$. This will be
covered in sections \ref{sec:assoc} and \ref{sec:basis}, respectively.
Multiplying by $S_{P \widehat Q}^{-1}$ is not a problem
since $S_{P\widehat Q}$ is a triangular matrix.

\item Finally, to compute all elements of $R^{-1}$ using (\ref{eq:invformula}),
we need a fast way to compute the expression
$(\sum_{k=1}^{n} d_{ik} (W_{\widehat{Q}}^T)^{k-1}) \cdot
V_{\widehat{Q}}^T $, which is what section \ref{sec:sums}
is dedicated to.
\end{enumerate}

\subsection{Quasiseparable matrices and polynomials}
In this section we briefly define quasiseparable matrices and their sub classes called semiseparable and well-free matrices and also the corresponding polynomial families.   
\bd 
\begin{itemize}
\item A matrix $A=[a_{i,j}]$ is called $(H,m)$-quasiseparable (i.e. Hessenberg lower part and order $m$ upper part) if {\bf (i)} it is strongly upper Hessenberg (i.e. non zero of first subdiagonal, $a_{i+1, i} \neq 0$ for $i=1,2, \cdots, n-1$ ), and {\bf (ii)} max (rank $A_{12}$) = m where the maximum is taken over all symmetric partitions of the form 
\[
A=\left[\begin{array}{c|c}
* & A_{12}\\
\hline
* & *
\end{array}\right]
\]
\item Let $A=[a_{i,j}]$ be a $(H,m)$-quasiseparable matrix. For $\lambda_i=\frac{1}{a_{i+1,i}}$, then the system of polynomials related to $A$ via 
\[
Q_k(x)=\lambda_1\lambda_2\cdots\lambda_k \:{\rm det} (xI-A)_{k \times k}
\]
is called a system of $(H,m)$-quasiseparable polynomials.
\end{itemize}
\ed
\begin{example}
{\bf (Tridiagonal matrices are $(H,1)$-quasiseparable)} 
It is known that real-orthogonal polynomials $\{Q_k(x)\}$ satisfy a three-term recurrence relation of the form
\be
Q_k(x)=(\alpha_kx-\delta_k)Q_{k-1}(x) - \gamma_k\cdot Q_{k-2}(x), \hspace{.3in} \alpha_k \neq 0, \gamma_k>0. 
\label{3toprr}
\ee
The real orthogonal polynomials satisfying (\ref{3toprr}) are related to the irreducible tridiagonal matrix
\be
T=\begin{bmatrix}
\frac{\delta_1 }{\alpha_1 } &\frac{\gamma_2 }{\alpha_2 }  & 0 & \cdots & 0\\ 
 \frac{1}{\alpha_1 }&  \frac{\delta_2 }{\alpha_2 }&\ddots  &\ddots  &\vdots  \\ 
0 & \frac{1}{\alpha_2 } &  \ddots& \frac{\gamma_{n-1} }{\alpha_{n-1} }  & 0\\ 
 \vdots &\ddots  & \ddots & \frac{\delta_{n-1} }{\alpha_{n-1} } &\frac{\gamma_n }{\alpha_n }  \\ 
 0& \cdots  & 0 &  \frac{1}{\alpha_{n-1} }& \frac{\delta_n }{\alpha_n }
\end{bmatrix}
\label{tdm}
\ee
 via
\[
Q_k(x)=\alpha_1\alpha_2\cdots\alpha_k \:{\rm det} (xI-A)_{k \times k}.
\]  
Note that if $A$ corresponds to the tridiagonal matrix $T$ (\ref{tdm}), then the corresponding submatrix $A_{12}$ has rank one as it is of the form $\left(\frac{\gamma_j}{\alpha_j}\right)e_k\:e_1^T$. Hence the tridiagonal matrices are $(H,1)$-qusiseparable. 
\end{example}
\begin{example}
{\bf (Unitary Hessenberg matrices are $(H,1)$-qusiseparable)} It is also known that Szeg${\rm \ddot{o}}$ polynomials $\{\phi^{\#}_k(x)\}$ satisfy a two-term recurrence relation of the form
\be
\left[ \begin{array}{c}
\phi_0(x)\\ 
\phi_0^{\#}(x)
\end{array}\right ] = \frac{1}{\mu_0}\left[ \begin{array}{c}
1\\ 
1
\end{array}\right ],
\hspace{.3in}
\left[ \begin{array}{c}
\phi_k(x)\\ 
\phi_k^{\#}(x)
\end{array}\right ] = \frac{1}{\mu_k}\left[ \begin{array}{cc}
1 & -\rho_k^*\\ 
 -\rho_k& 1 
\end{array}\right ]\left[ \begin{array}{c}
\phi_{k-1}(x)\\ 
x\:\phi_{k-1}^{\#}(x)
\end{array}\right ], \hspace{.3in} (k=1,2, \cdots,n) 
\label{2tsprr}
\ee
where $\{\rho_k\}$ are reflection coefficients satisfying $\rho_0=-1, \:\: |\rho_k| < 1({\rm for\:} k =1,2,\cdots,n-1), \:\: |\rho_n|\leq1$, $\{\mu_k\}$ are complementary parameters defined via $\mu_k=\left\{\begin{matrix}
\sqrt{1-|\rho_k|^2}, &|\rho_k|<1\\ 
1, &|\rho_k|=1
\end{matrix}\right.$ and $\{\phi_k(x)\}$ is a system of axillary polynomials.  The Szeg${\rm \ddot{o}}$ polynomials satisfying (\ref{2tsprr}) are related to the Unitary Hessenbery matrix
\be
H=\begin{bmatrix}
-\rho_0^*\rho_1 & -\rho_0^*\mu_1\rho_2 & \cdots & -\rho_0^*\mu_1\cdots\mu_{n-1}\rho_n\\ 
 \mu_1&  -\rho_1^*\rho_2& \cdots & -\rho_1^*\mu_2\cdots\mu_{n-1}\rho_n\\ 
 0& \ddots &  \ddots& \vdots\\ 
 \vdots& \ddots & \ddots & \ddots\\ 
 0 & 0 & \mu_{n-1} & -\rho_{n-1}^*\rho_n
\end{bmatrix}
\label{uHm}
\ee
via
\[
\phi_k^{\#}(x) =\frac{1}{\mu_1\mu_2\cdots\mu_k} \:{\rm det} (xI-H)_{k \times k}.
\]  
Note also that if $A$ corresponds to the Unitary Hessenberg matrix $H$ (\ref{uHm}), then the corresponding $3 \times (n-1)$ submatrix $A_{12}$ has the form
\[
A_{12}=\begin{bmatrix}
-\rho_k\mu_{k-1}\cdots\mu_3\mu_2\mu_1\rho_0^* & -\rho_{k-1}\mu_{k-2}\cdots\mu_3\mu_2\mu_1\rho_0^* & \cdots & -\rho_n\mu_{n-1}\cdots\mu_3\mu_2\mu_1\rho_0^*\\
-\rho_k\mu_{k-1}\cdots\mu_3\mu_2\rho_1^* & -\rho_{k-1}\mu_{k-2}\cdots\mu_3\mu_2\rho_1^* & \cdots & -\rho_n\mu_{n-1}\cdots\mu_3\mu_2\rho_1^*\\
-\rho_k\mu_{k-1}\cdots\mu_3\rho_2^* & -\rho_{k-1}\mu_{k-2}\cdots\mu_3\rho_2^* & \cdots & -\rho_n\mu_{n-1}\cdots\mu_3\rho_2^*
\end{bmatrix}
\]
which is also rank one. This is also true for all other symmetric partitions of $H$. Hence the Unitary Hessenberg matrices are $(H,1)$-qusiseparable. 
\end{example}
\subsubsection{Recurrence relations on quasiseparable, semi-separable and well-free polynomials}
It was first proved in \cite{EGO05} that the quasiseparable matrices corresponding to the system of quasiseparable polynomials $Q = \{Q_0(x), Q_1(x), \ldots, Q_{n-1}(x)\}$ with $\deg Q_k = k$  satisfies the {\it EGO-type two term recurrence relation}:
\begin{equation}
\left[ \begin{array}{c} G_k(x) \\ Q_k(x)  \end{array} \right]
= 
\left[\begin{array}{cc} \alpha_k & \beta_k \\ \gamma_k & \delta_k x + \theta_k \end{array} \right]
\left[\begin{array}{c} G_{k-1}(x) \\ Q_{k-1}(x) \end{array} \right].
\label{eq:ego}
\end{equation}
By referring to the classification paper \cite{BEGO13}, it is possible to generalize the recurrence relations of orthogonal polynomials  (\ref{3toprr}), which we called the generalized three-term recurrence relation or recurrence relations on {\it well-free polynomials} $Q$ defined via
\be
Q_k(x)=(\alpha_kx-\delta_k)Q_{k-1}(x) - (\beta_kx+\gamma_k)\cdot Q_{k-2}(x). 
\label{ewfrr}
\ee
where $Q = \{Q_0(x), Q_1(x), \ldots, Q_{n-1}(x)\}$ and $\deg Q_k = k$.
\newline\newline
It is also possible to generalize the recurrence relations of Szeg${\rm \ddot{o}}$ polynomials (\ref{2tsprr}) which we called Szeg${\rm \ddot{o}}$-type two-term recurrence relation or recurrence relations on {\it semiseparable polynomials} $Q$ defined via
\begin{equation}
\left[ \begin{array}{c} G_k(x) \\ Q_k(x)  \end{array} \right]
= 
\left[\begin{array}{cc} \alpha_k & \beta_k \\ \gamma_k & 1 \end{array} \right]
\left[\begin{array}{c} G_{k-1}(x) \\ (\delta_k x + \theta_k) Q_{k-1}(x) \end{array} \right],
\label{eq:semi}
\end{equation}
where $Q = \{Q_0(x), Q_1(x), \ldots, Q_{n-1}(x)\}$ with $\deg Q_k = k$ and $\{G_k(x)\}$ are axillary polynomials.
\subsection{Main results}
In Section \ref{sec:gepp}, we introduce a generalized fast Gaussian-elimination algorithm for the polynomial Vandermonde-like matrices. Next in section \ref{sec:gen} we introduce displacement operators for quasiseparable, semiseparable and well-free Vandermonde matrices. In section \ref{sec:assoc} we derive recurrence relations for Horner polynomials corresponding to the quasiseparable, semiseparable and well-free polynomials. In section \ref{sec:basis} we introduce the recurrence relations for the columns of basis transformation matrices for quasiseparable, semiseparable and well-free polynomials. Next in section \ref{sec:sums} we state supporting results which enable us to compute the sums in the inversion formula of the quasiseparable Vandermonde-like matrices. Finally in section \ref{sec:invalgo} we state the algorithm for the inversion of quasiseparable Vandermonde-like matrices and conclude that the algorithm costs only $\mathcal{O}(n^2)$ operations.

\section{Fast Gaussian elimination algorithm} 
\label{sec:gepp}
It is known that Gaussian elimination for matrices with
displacement structure can be implemented much faster than in $\mathcal{O}(n^3)$
operations. The following lemma from \cite{GKO95, KO97} serves as a basis for such an implementation.

\begin{lemma}\label{lemma:gaussdisp}
Let square matrix $R^{(k)}=\left[\begin{smallmatrix}d^{(k)} & u^{(k)}
    \\ l^{(k)} & R_{22}^{(k)}\end{smallmatrix}\right]$,
$d^{(k)} \neq 0$,
satisfy displacement equation
\[
\Omega^{(k)} R^{(k)} - R^{(k)} A^{(k)} = G^{(k)} B^{(k)}.
\]
Then the Schur complement $R^{(k+1)} = R_{22}^{(k)} - \frac{1}{d^{(k)}} l^{(k)} u^{(k)}$ satisfies
\[
    \Omega^{(k+1)} R^{(k+1)} - R^{(k+1)} A^{(k+1)} = G^{(k+1)} B^{(k+1)}.
\]
where $\Omega^{(k+1)}$ and $A^{(k+1)}$ are obtained from $\Omega^{(k)}$
and $A^{(k)}$, respectively, by removing the first row and column, and
\begin{eqnarray}
\left[\begin{matrix}0 \\ G^{(k+1)}\end{matrix} \right] = G^{(k)} - 
    \left[\begin{matrix}1 \\ \frac{1}{d^{(k)}}l^{(k)} \end{matrix}\right] \cdot g_1^{(k)}, \qquad
\left[\begin{matrix}0 & B^{(k+1)}\end{matrix}\right] = B^{(k)} - 
    b_1^{(k)} \cdot \left[\begin{matrix}1 & \frac{1}{d^{(k)}}u^{(k)} \end{matrix}\right],
\label{eq:schurgen}
\end{eqnarray}
where $g_1^{(k)}$ and $b_1^{(k)}$ are the first row of $G^{(k)}$
and the first column of $B^{(k)}$, respectively.
\end{lemma}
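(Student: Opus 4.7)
The plan is to split every matrix in the displacement equation into $(1, n{-}1)$-block form conformally with $R^{(k)}$ and extract the $(2,2)$ block, which — after substituting the Schur-complement identity $R_{22}^{(k)} = R^{(k+1)} + \tfrac{1}{d^{(k)}} l^{(k)} u^{(k)}$ — should become the new displacement equation. Implicit in the lemma is a structural hypothesis that $\Omega^{(k)}$ has scalar $(1,1)$ entry and zero off-diagonal first row, and $A^{(k)}$ has scalar $(1,1)$ entry and zero off-diagonal first column; this is automatic in our application, where $\Omega^{(k)} = D_{\frac{1}{x}}$ is diagonal and $A^{(k)} = W_Q$ is upper triangular.

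First I would write
\[
\Omega^{(k)} = \begin{bmatrix} \omega_{11} & 0 \\ \omega_{21} & \Omega^{(k+1)} \end{bmatrix}, \quad
A^{(k)} = \begin{bmatrix} a_{11} & a_{12} \\ 0 & A^{(k+1)} \end{bmatrix}, \quad
G^{(k)} = \begin{bmatrix} g_1^{(k)} \\ G_{22}^{(k)} \end{bmatrix}, \quad
B^{(k)} = \begin{bmatrix} b_1^{(k)} & B_{22}^{(k)} \end{bmatrix},
\]
and read off the four block identities from $\Omega^{(k)} R^{(k)} - R^{(k)} A^{(k)} = G^{(k)} B^{(k)}$. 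Applying the Schur-complement substitution to the $(2,2)$ identity yields
\[
\Omega^{(k+1)} R^{(k+1)} - R^{(k+1)} A^{(k+1)} = G_{22}^{(k)} B_{22}^{(k)} - \omega_{21} u^{(k)} + l^{(k)} a_{12} - \tfrac{1}{d^{(k)}} \bigl(\Omega^{(k+1)} l^{(k)}\bigr) u^{(k)} + \tfrac{1}{d^{(k)}} l^{(k)} \bigl(u^{(k)} A^{(k+1)}\bigr).
\]
I would then use the $(2,1)$ identity to rewrite $\Omega^{(k+1)} l^{(k)} = G_{22}^{(k)} b_1^{(k)} + a_{11} l^{(k)} - \omega_{21} d^{(k)}$ and the $(1,2)$ identity to rewrite $u^{(k)} A^{(k+1)} = \omega_{11} u^{(k)} - d^{(k)} a_{12} - g_1^{(k)} B_{22}^{(k)}$. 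The loose contributions $\omega_{21} u^{(k)}$ and $l^{(k)} a_{12}$ cancel telescopically, leaving a residual scalar multiple $\tfrac{\omega_{11} - a_{11}}{d^{(k)}} l^{(k)} u^{(k)}$.

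Finally, I would expand the proposed update $G^{(k+1)} B^{(k+1)} = \bigl(G_{22}^{(k)} - \tfrac{1}{d^{(k)}} l^{(k)} g_1^{(k)}\bigr)\bigl(B_{22}^{(k)} - \tfrac{1}{d^{(k)}} b_1^{(k)} u^{(k)}\bigr)$ into four terms, three of which match the cleaned-up right-hand side directly, and the fourth, $\tfrac{1}{(d^{(k)})^2} l^{(k)} g_1^{(k)} b_1^{(k)} u^{(k)}$, is identified with the residual via the $(1,1)$ scalar block identity $(\omega_{11} - a_{11}) d^{(k)} = g_1^{(k)} b_1^{(k)}$. The prepended zeros in (\ref{eq:schurgen}) then follow trivially from $g_1^{(k)} - g_1^{(k)} = 0$ and $b_1^{(k)} - b_1^{(k)} = 0$.

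The main obstacle is the careful bookkeeping of the three independent sources of rank-one $l^{(k)} u^{(k)}$-contributions — from the $(2,1)$, $(1,2)$, and $(1,1)$ block identities — which must combine to reproduce exactly the rank-preserving product prescribed by (\ref{eq:schurgen}); the triangular structure of $\Omega^{(k)}$ and $A^{(k)}$ is precisely what prevents higher-rank residues of the form $R_{22}^{(k)} a_{21}$ or $\omega_{12} R_{22}^{(k)}$ from surviving the computation, so that the displacement rank is preserved (and in fact bounded by the original $\alpha$) at every elimination step.
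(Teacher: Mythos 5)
Your proof is correct. The paper itself gives no proof of this lemma --- it is quoted from \cite{GKO95, KO97} --- and your block-partition argument is precisely the standard one from those sources: split the displacement equation conformally with $R^{(k)}$, substitute the Schur-complement identity into the $(2,2)$ block, and use the $(2,1)$, $(1,2)$ and $(1,1)$ identities to convert $\Omega^{(k+1)}l^{(k)}$, $u^{(k)}A^{(k+1)}$ and the residual scalar factor into the rank-$\alpha$ product $G^{(k+1)}B^{(k+1)}$; I checked the cancellations and they all go through as you describe. You are also right to flag the implicit structural hypothesis (first row of $\Omega^{(k)}$ and first column of $A^{(k)}$ supported only on the diagonal entry), which the lemma statement omits but which is needed for the argument and is satisfied in the paper's application with $\Omega^{(k)}=D_{1/x}^{(k)}$ diagonal and $A^{(k)}=W_Q^{(k)}$ upper triangular.
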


\noindent Lemma \ref{lemma:gaussdisp} guarantees that each Schur complement
obtained in the process of Gaussian elimination will have a similar
displacement structure. This allows us to compute only the generators
$G^{(k+1)}$, $B^{(k+1)}$ of the Schur complement instead of computing
all its entries. However, to obtain these generators using formulas
(\ref{eq:schurgen}), we need a way to compute the first row
$[\begin{smallmatrix}d^{(k)} & u^{(k)} \end{smallmatrix}]$
and the first column $\left[\begin{smallmatrix}d^{(k)} \\ l^{(k)}\end{smallmatrix}\right]$
of the matrix $R^{(k)}$.  In our case
\[
    \Omega^{(k)} = D_{\frac{1}{x}}^{(k)} = \diag\left(\frac{1}{x_k}, \frac{1}{x_{k+1}}, \ldots, \frac{1}{x_n}\right), \qquad
    A^{(k)} = W_Q^{(k)},
\]
where $W_Q^{(k)}$ is obtained from $W_Q$ by removing the first $(k-1)$ rows
and columns.
This means that Schur complements will satisfy
\begin{equation}\label{eq:schurdisp}
D_{\frac{1}{x}}^{(k)} R^{(k)} - R^{(k)} W_Q^{(k)} = G^{(k)} B^{(k)}.
\end{equation}
To get the first column of $R^{(k)}$, multiply (\ref{eq:schurdisp}) by $e_1$
from the right and by $D_x$ from the left to get
\begin{equation}\label{eq:geppcol}
\left[\begin{matrix}d^{(k)} \\ l^{(k)}\end{matrix}\right]
  = D_x \cdot G^{(k)} \cdot b_1^{(k)}.
\end{equation}
Obtaining the first row of $R^{(k)}$ is a bit harder. Multiply (\ref{eq:schurdisp})
from the left by $e_1^T \cdot D_x$ to get
\begin{equation}\label{eq:gepprowwq}
\left[\begin{matrix}d^{(k)} & u^{(k)} \end{matrix}\right] \cdot (I - x_k W_Q^{(k)})
    = x_k \cdot g_1^{(k)} \cdot B^{(k)}.
\end{equation}
Since $W_Q, M_Q$ and $N_Q$ are upper triangular matrices, we can write
$W_Q^{(k)} = N_Q^{(k)} \cdot ( M_Q^{(k)} )^{-1}$, where $N_Q^{(k)}$ and $M_Q^{(k)}$
are submatrices of $N_Q$ and $M_Q$, respectively, obtained by deleting the first $(k-1)$
rows and columns. Hence by multiplying (\ref{eq:gepprowwq}) by $M_Q^{(k)}$ from
the right one obtains
\begin{eqnarray}\label{eq:gepprow}
\left[\begin{matrix}d^{(k)} & u^{(k)} \end{matrix}\right] \cdot (M_Q^{(k)} - x_k N_Q^{(k)})
    = x_k \cdot g_1^{(k)} \cdot B^{(k)} \cdot M_Q^{(k)}.
\end{eqnarray}
To recover the first row
$[\begin{smallmatrix}d^{(k)} & u^{(k)} \end{smallmatrix}]$,
one has to perform a multiplication by $M_Q^{(k)}$
and solve a system of linear equations with
the matrix $(M_Q^{(k)} - x_k N_Q^{(k)})$. We will see in section \ref{sec:gen}
that in our case this matrix has a special quasiseparable structure
which allows us to do it in linear time.
\\
\begin{remark} {\bf (Partial pivoting)} 
It is known by \cite{GKO95} that row permutations of $R$ do not destroy the displacement
structure. This means that we can easily incorporate partial pivoting into
our Gaussian elimination algorithm. If $R$ satisfies the displacement equation
(\ref{eq:dispwq}), swapping its $i$-th and $j$-th rows is equivalent to
swapping $x_i$ and $x_j$, and the $i$-th and $j$-th rows of $G$.
\end{remark}
\vspace{.1in}
\noindent Now we can summarize the Gaussian elimination with partial pivoting
(GEPP) for Vandermonde-like matrices as Algorithm \ref{algo:gepp}.
This algorithm produces the factorization
\begin{equation}\label{eq:plu}
R = P \cdot L \cdot U, \qquad P = P_1 \cdot P_2 \cdots P_n,
\end{equation}
where $P_k$ is the permutation matrix corresponding to swapping
rows on the $k$-th step of GEPP.

\begin{algorithm}
\caption{GEPP for Vandermonde-like matrices}
\label{algo:gepp}
\begin{algorithmic}[1]
\State Let $G^{(1)} = G$, $B^{(1)} = B$.
\For{ $k = 1, 2, \ldots, n$ }
\State Compute the first column
        $\left[\begin{smallmatrix}d^{(k)} \\ l^{(k)}\end{smallmatrix}\right]$
        using (\ref{eq:geppcol}).
\State Find, say at position $m$, the maximum magnitude element of
$\left[\begin{smallmatrix}d^{(k)} \\ l^{(k)}\end{smallmatrix}\right]$.
\State Swap $d^{(k)}$ and $m$-th entry of
    $\left[\begin{smallmatrix}d^{(k)} \\ l^{(k)}\end{smallmatrix}\right]$.
\State Swap $x_k$ and $x_{k+m-1}$.
\State Swap the first and $m$-th rows of $G^{(k)}$.
\State Compute the right-hand side of (\ref{eq:gepprow}).
\State Solve (\ref{eq:gepprow}) for
$[\begin{smallmatrix}d^{(k)} & u^{(k)} \end{smallmatrix}]$.
\State Write the $k$-th column
$\left[\begin{smallmatrix}0 \\ \vdots \\ 0 \\ 1 \\ l^{(k)}/d^{(k)} \end{smallmatrix}\right]$ of $L$.
\State Write the $k$-th row 
$[\begin{smallmatrix} 0 & \cdots & 0 & d^{(k)} & u^{(k)} \end{smallmatrix}]$ of $U$.
\State Let $P_k$ be a $n \times n$ matrix which swaps
$k$-th and $(k+m-1)$-th rows.
\State Compute $G^{(k+1)}$ and $B^{(k+1)}$ using (\ref{eq:schurgen}).
\EndFor
\end{algorithmic}
\end{algorithm}

\noindent The following theorem follows from Algorithm \ref{algo:gepp}.

\bt
\label{thm:gepp}
Let $R$ satisfy the displacement equation (\ref{eq:dispwq}).
Let $C_1(n)$ be an upper bound on the complexity of solving
a linear system with a matrix of the form $(M_Q^{(k)} - x_k N_Q^{(k)})$,
  $k = 1,\ldots,n$,
and $C_2(n)$ an upper bound on complexity of performing a vector-matrix
multiplication $v \cdot M_Q^{(k)}$, $k=1,\ldots,n$. Then the factorization
(\ref{eq:plu}) can be computed in $\mathcal{O}(\alpha n^2) + nC_1(n) + nC_2(n)$
operations.
\et
\begin{proof}
Consider the loop of Algorithm \ref{algo:gepp}.
Steps 3 and 13 can be done in $\mathcal{O}(\alpha n)$ operations.
Steps 4--7 and 10--11 can be performed in $\mathcal{O}(n)$ operations.
Step 8 can be done in $\mathcal{O}(\alpha n)$ + $C_2(n)$ operations,
    and step 9 in $C_1(n)$ operations. Since $n$ iterations are performed,
    the overall complexity is $\mathcal{O}(\alpha n^2) + nC_1(n) + nC_2(n)$ operations.
\end{proof}
\noindent We will see in section \ref{sec:gen} that in our case, matrices of the form
$M_Q - \xi N_Q$ (where $\xi$ is a constant) are quasiseparable.
For such matrices, linear-time inversion and multiplication algorithms are available
(see \cite{EG99}), thus $C_1(n) = \mathcal{O}(n)$ and $C_2(n) = \mathcal{O}(n)$.


\section{Recurrence relation matrices} \label{sec:gen}
In this section, we study the structure of recurrence relation
matrices $M_Q$ and $N_Q$ for quasiseparable, semiseparable
and well-free polynomials. These matrices correspond to the displacement operator for polynomial Vandermonde-like matrices. In a later section, we use the structures of $M_Q$ and $N_Q$ to compute generators
for matrices of the form $M_Q - \xi N_Q$. 
We need the latter for an efficient Gaussian elimination
algorithm for Vandermonde-like matrices.

\subsection{Quasiseparable polynomials}
The next two lemmas show that for a system of polynomials $\{Q\}$ of $1$--quasiseparable polynomials,
recurrence relation matrix $M_Q$ is upper triangular $2$--quasiseparable. As $M_Q$ is an upper triangular matrix with quasiseparable structure instead of calling it $(H,2)$-quasiseparable, we called $M_Q$ a $2$--quasiseparable matrix. Similarly we call $M_Q$ a $2$--quasiseparable matrix for the semiseprable and well-free cases.  
\begin{lemma}
\label{lemma:mnqs}
If a system of quasiseparable polynomials $\{Q\}$ satisfies recurrence
relations (\ref{eq:ego}), then the recurrence relation matrices (\ref{eq:mn})
have the form
\begin{equation}
M_Q=\left[
    \begin{array}{cccccc}
           1 & -\theta_1 & a_{0,2} & a_{0,3} & \cdots &   a_{0,n-1} \\
           0 &       1 & -\theta_2 &   a_{1,3} & \ldots &   a_{1,n-1} \\
      \vdots &  \ddots &  \ddots &  \ddots    &  \ddots&      \vdots \\
 \vdots &   &  \ddots &  \ddots    & \ddots &      a_{n-3,n-1} \\

      \vdots &         &   & \ddots  &  1 & -\theta_{n-1} \\
           0 &  \cdots &  \cdots & \cdots &    0 &           1
    \end{array}
\right],
\quad
N_Q=\left[
    \begin{array}{ccccc}
         0 & \delta_1 &        0 & \cdots &            0 \\
         0 &        0 & \delta_2 & \ddots &            \vdots \\
         0 &        \ddots &        0 & \ddots &       0 \\
    \vdots &          &       \ddots   & \ddots & \delta_{n-1} \\
         0 &   \cdots &   \cdots &      0 &            0
    \end{array}
\right],
\label{eq:mnqs}
\end{equation}
where
\begin{equation}
a_{jk} = -\beta_{j+1} \alpha_{j+1,k}^{\times} \gamma_k,
    \qquad \alpha_{j+1,k}^{\times}=\prod_{i=j+2}^{k-1}\alpha_{i}.
\label{eq:qsaij}
\end{equation}
\end{lemma}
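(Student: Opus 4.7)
The plan is to reduce the EGO-type system (\ref{eq:ego}) to the scalar recurrence (\ref{generalrecur}) by eliminating the auxiliary polynomials $G_k(x)$. Reading off the second row of (\ref{eq:ego}) gives
\[
Q_k(x) = \delta_k\, x\, Q_{k-1}(x) + \theta_k\, Q_{k-1}(x) + \gamma_k\, G_{k-1}(x),
\]
so by matching with (\ref{generalrecur}) one immediately identifies $\tau_k = \delta_k$ (which gives the superdiagonal entries of $N_Q$) and $a_{k-1,k} = -\theta_k$ (which gives the first superdiagonal of $M_Q$). All that remains is to expand $\gamma_k G_{k-1}(x)$ as a linear combination of $Q_0,\ldots,Q_{k-2}$ and match coefficients to obtain the formula (\ref{eq:qsaij}) for $a_{jk}$.

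Next I would use the first row of (\ref{eq:ego}), $G_k = \alpha_k G_{k-1} + \beta_k Q_{k-1}$, and unroll it by induction. With $G_0 = 0$ (which is the standard initialization of the auxiliary sequence in the EGO two-term framework, and is also forced by the degree constraint $\deg Q_k = k$ once one inspects the $k=1$ step), induction gives
\[
G_{k-1}(x) \;=\; \sum_{j=0}^{k-2} \beta_{j+1}\Bigl(\prod_{i=j+2}^{k-1}\alpha_i\Bigr) Q_j(x) \;=\; \sum_{j=0}^{k-2} \beta_{j+1}\,\alpha_{j+1,k}^{\times}\,Q_j(x),
\]
with the convention that the empty product (occurring at $j=k-2$) equals $1$. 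The inductive step is a one-line computation: if the formula holds for $G_{k-2}$, then $G_{k-1} = \alpha_{k-1}G_{k-2} + \beta_{k-1}Q_{k-2}$ produces the same sum with the index range extended by one, and the product shifts exactly as required.

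Substituting this expression into the scalar form of the recurrence and equating coefficients with (\ref{generalrecur}) yields
\[
-a_{j,k} \;=\; \gamma_k\,\beta_{j+1}\,\alpha_{j+1,k}^{\times}, \qquad 0 \le j \le k-2,
\]
which is precisely (\ref{eq:qsaij}). The sparsity pattern displayed for $N_Q$ in (\ref{eq:mnqs}) is then immediate from the general definition (\ref{eq:mn}) together with $\tau_k = \delta_k$, and the entries of $M_Q$ displayed in (\ref{eq:mnqs}) are the combination of $a_{k-1,k} = -\theta_k$ on the first superdiagonal and the closed form for $a_{j,k}$ elsewhere.

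The only delicate point I anticipate is the initialization and bookkeeping of indices: one must be careful about the convention $G_0 = 0$ (and justify it either as the standing assumption of the EGO framework or by checking the base case $k=1$ directly from (\ref{eq:ego})), and about the empty product convention for $\alpha_{j+1,k}^{\times}$ when $j = k-2$, so that the formula (\ref{eq:qsaij}) agrees with the explicit value $a_{k-2,k} = -\beta_{k-1}\gamma_k$ read off from the first step of the unrolling. Beyond that, everything is a straightforward induction and matching of coefficients.
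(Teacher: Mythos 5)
Your proof is correct and reaches the same coefficient identification as the paper, but it is more self-contained: the paper's proof simply quotes the expanded single recurrence $Q_k = (\delta_k x+\theta_k)Q_{k-1} + \gamma_k\beta_{k-1}Q_{k-2} + \gamma_k\alpha_{k-1}\beta_{k-2}Q_{k-3} + \cdots + \gamma_k\alpha_{k-1}\cdots\alpha_2\beta_1 Q_0$ from the proof of Theorem 3.5 of \cite{BOZ11} and then compares it with (\ref{generalrecur}), whereas you actually derive that expanded form by unrolling $G_{k-1}(x)=\sum_{j=0}^{k-2}\beta_{j+1}\alpha_{j+1,k}^{\times}Q_j(x)$ and substituting into the second row of (\ref{eq:ego}). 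Your unrolled sum matches the cited formula term by term (the coefficient of $Q_j$ is $\gamma_k\beta_{j+1}\alpha_{j+2}\cdots\alpha_{k-1}$, including the empty product at $j=k-2$), so the argument buys a proof that does not lean on the external reference, at the cost of having to fix the initialization of the auxiliary sequence. On that last point, one small correction: the convention $G_0(x)=0$ is indeed what \cite{BOZ11} (and the paper itself, explicitly in the semiseparable case via $\tilde G_0=0$) uses, but it is \emph{not} forced by the degree constraint $\deg Q_k=k$ --- a nonzero constant $G_0$ would still give $\deg Q_1=1$ provided $\delta_1\tau_0\neq 0$; it would merely perturb the coefficient $a_{0,k}$. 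So you should state $G_0=0$ as part of the standing EGO convention rather than claim it is implied by the degree condition. With that adjustment the proof is complete.
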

\begin{proof}
From \cite{BOZ11} (see the proof of Theorem 3.5),
we know that polynomials $Q_k(x)$ have the form
\begin{equation}
\begin{split}
Q_k(x) = (\delta_k x + \theta_k) Q_{k-1}(x) + \gamma_k \beta_{k-1} Q_{k-2}(x) 
    + \gamma_k \alpha_{k-1} \beta_{k-2} Q_{k-3}(x) \\
    + \gamma_k \alpha_{k-1} \alpha_{k-2} \beta_{k-3} Q_{k-4}(x) + \ldots
    + \gamma_k \alpha_{k-1}\cdots \alpha_2 \beta_1 Q_0(x).
\end{split}
\end{equation}
Comparing this with the recurrence relations (\ref{generalrecur}),
one can see that matrices $M_Q$ and $N_Q$ are defined by (\ref{eq:mnqs}).
\end{proof}

\begin{lemma} \label{lemma:mgen}
For any $t_1, t_2, \ldots, t_{n-1}$, the matrix
\begin{equation}
\tilde{M}_Q(t_1,t_2,\ldots,t_{n-1}) = \left[
     \begin{array}{cccccc}
           1 & t_1 & a_{0,2} & a_{0,3} & \cdots &   a_{0,n-1} \\
           0 &       1 & t_2 &   a_{1,3} & \ldots &   a_{1,n-1} \\
      \vdots &  \ddots &  \ddots &  \ddots    &  \ddots&      \vdots \\
 \vdots &   &  \ddots &  \ddots    & \ddots &      a_{n-3,n-1} \\

      \vdots &         &   & \ddots  &  1 & t_{n-1} \\
           0 &  \cdots &  \cdots & \cdots &    0 &           1
    \end{array}
\right]
\end{equation}
with
\[
a_{jk} = -\beta_{j+1} \alpha_{j+1,k}^{\times} \gamma_k,
    \qquad \alpha_{j+1,k}^{\times}=\alpha_{j+2}\alpha_{j+3}\cdots \alpha_{k-1}.
\]
is upper triangular $2$--quasiseparable with generators
\begin{eqnarray*}
d_j &=& 1, \quad j=1,\ldots,n, \\
g_j &=& [\,t_j \;\;\; {-\beta_j}\,], \quad j = 1,\ldots,n-1, \\
b_i &=& \left[\begin{array}{cc}
            0           & 0         \\
            \gamma_i    & \alpha_i
        \end{array}\right], \quad i = 2, \ldots, n-1,\\
h_k &=& [\,1 \;\; 0\,]^T, \quad k = 2, \ldots, n.
\end{eqnarray*}
That is,
\begin{equation}
[\tilde{M}_Q(t_1,t_2,\ldots,t_{n-1})]_{jk} = \begin{cases}
    0, & k < j, \\
    d_j, & k = j, \\
    g_j h_k, & k = j + 1, \\
    g_j b_{j+1} b_{j+2} \cdots b_{k-1} h_k, & k > j + 1.
\end{cases} \label{eq:mqtij}
\end{equation}
\end{lemma}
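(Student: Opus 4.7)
The plan is to verify the quasiseparable representation by direct computation, checking each clause of the piecewise definition in (\ref{eq:mqtij}) against the explicit entries of $\tilde{M}_Q$. Since $\tilde{M}_Q$ is upper triangular by construction, the case $k < j$ is automatic. The diagonal case $k = j$ is immediate from $d_j = 1$, and the superdiagonal case $k = j+1$ reduces to
\[
g_j h_{j+1} = [\,t_j \;\; -\beta_j\,]\,[\,1 \;\; 0\,]^T = t_j,
\]
matching the $t_j$ entry at position $(j, j+1)$.

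The substantive work is the case $k > j+1$. I would compute the telescoping product $b_{j+1} b_{j+2} \cdots b_{k-1} h_k$ from the right. Starting with $b_{k-1} h_k = [\,0 \;\; \gamma_{k-1}\,]^T$, observe that each $b_i = \left[\begin{smallmatrix} 0 & 0 \\ \gamma_i & \alpha_i \end{smallmatrix}\right]$ applied to a vector of the form $[\,0 \;\; v\,]^T$ simply yields $[\,0 \;\; \alpha_i v\,]^T$. A one-line induction on $k-j$ therefore gives
\[
b_{j+1} b_{j+2} \cdots b_{k-1} h_k = \begin{bmatrix} 0 \\ \alpha_{j+1} \alpha_{j+2} \cdots \alpha_{k-2} \gamma_{k-1} \end{bmatrix},
\]
with the understanding that when $k = j+2$ the $\alpha$-product is empty.

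Left-multiplying by $g_j = [\,t_j \;\; -\beta_j\,]$ produces
\[
g_j b_{j+1} \cdots b_{k-1} h_k = -\beta_j \, \alpha_{j+1} \alpha_{j+2} \cdots \alpha_{k-2} \, \gamma_{k-1}.
\]
After the index shift from the 1-indexed matrix position $(j,k)$ to the 0-indexed symbols of Lemma \ref{lemma:mnqs} (so that matrix position $(j,k)$ carries the entry $a_{j-1,k-1}$ in the displayed form), this equals $-\beta_j\,\alpha_{j,k-1}^{\times}\,\gamma_{k-1} = a_{j-1,k-1}$, which is precisely the $(j,k)$-entry displayed in the statement of the lemma.

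The only real obstacle is this off-by-one bookkeeping: the defining formula for $a_{jk}$ uses $\beta_{j+1}$ (on the 0-indexed scale), while the generator $g_j$ at matrix row $j$ carries $-\beta_j$ (on the 1-indexed scale). Once the indexing conventions are aligned, no further structural ingredient is needed; the entire argument is the telescoping multiplication above plus the trivial diagonal/superdiagonal checks.
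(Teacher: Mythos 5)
Your proof is correct and follows essentially the same route as the paper's: both handle the cases $k\le j+1$ by inspection and then evaluate the telescoping product of the $b_i$'s (the paper computes the full $2\times2$ product $b_{j+1}\cdots b_{k-1}$ while you apply it to $h_k$ from the right, which is the same computation), arriving at $-\beta_j\,\alpha_{j,k-1}^{\times}\gamma_{k-1}=a_{j-1,k-1}$. The index bookkeeping you flag is handled identically in the paper.
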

\begin{proof}
It is easy to check that the formula (\ref{eq:mqtij}) holds for $k\leq j+1$.
If $k > j + 1$, then
\[
    b_{j+1}b_{j+2}\cdots b_{k-1} = \left[\begin{array}{cc}
        0 & 0 \\
        \alpha_{j,k-1}^{\times} \gamma_{k-1} & \alpha_{j,k}^{\times} 
    \end{array}\right].
\]
Therefore, $g_j b_{j+1}b_{j+2}\cdots b_{k-1} h_k
    = -\beta_j \alpha_{j,k-1}^{\times} \gamma_{k-1} $.
On the other hand, 
\[
[\tilde{M}_Q(t_1,t_2,\ldots,t_{n-1})]_{jk} = a_{j-1,k-1}
 = -\beta_j \alpha_{j,k-1}^{\times}\gamma_{k-1}.
\]
Hence the formula (\ref{eq:mqtij}) holds for $k > j + 1$ as well.
\end{proof}
\noindent Using Lemma \ref{lemma:mgen} with $t_i = -\theta_i$,
we conclude that $M_Q$ is $2$--quasiseparable.
By letting $t_i = -\theta_i - \xi \delta_i$,
we can obtain generators for the matrix $(M_Q - \xi N_Q)$.

\subsection{Semiseparable polynomials}
In this section we obtain the structure of the recurrence relation matrices
$M_Q$ and $N_Q$ for semiseparable polynomials defined by (\ref{eq:semi}).
For convenience we denote $\beta_0 = 1$.

\begin{lemma}
\label{lemma:mnss}
If a system of semiseparable polynomials $\{Q\}$ satisfies recurrence
relations (\ref{eq:semi}), then the recurrence relation matrices (\ref{eq:mn})
have the form
\begin{equation}
\begin{split}
M_Q&=\left[
    \begin{array}{cccccc}
           1 & -(\theta_1 + \gamma_1 \beta_0) & a_{0,2}  & a_{0,3}& \cdots &   a_{0,n-1} \\
           0 &       1 & -(\theta_2 + \gamma_2 \beta_1) & a_{1,3} &\ldots &   a_{1,n-1} \\
      \vdots &  \ddots &  \ddots & \ddots    &  \ddots &      \vdots \\
 \vdots &   &  \ddots &  \ddots   & \ddots  &      a_{n-3,n-1} \\
      \vdots &         &   & \ddots & 1 & -(\theta_{n-1} + \gamma_{n-1} \beta_{n-2}) \\
           0 &  \cdots &  \cdots &  \cdots  &  0 &           1
    \end{array}
\right],\\
N_Q&=\left[
    \begin{array}{ccccc}
         0 & \delta_1 &        0 & \cdots &            0 \\
         0 &        0 & \delta_2 & \ddots &            \vdots \\
         0 &         &        \ddots & \ddots &      0  \\
    \vdots &          &          & \ddots & \delta_{n-1} \\
         0 &   \cdots &   \cdots &      0 &            0
    \end{array}
\right],
\end{split}
\label{eq:mnss}
\end{equation}
where
\begin{equation}
a_{jk} = -\beta_{j} \cdot (\alpha - \beta \gamma)_{j,k}^{\times} \cdot \gamma_k,
\label{eq:aijss}
\end{equation}
\[
    \qquad (\alpha - \beta \gamma)_{j,k}^{\times}= \prod_{i=j+1}^{k-1}
    (\alpha_i - \beta_i \gamma_i).
\]
\end{lemma}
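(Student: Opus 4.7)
The plan is to follow exactly the same strategy used in Lemma \ref{lemma:mnqs}: obtain an explicit expansion of $Q_k(x)$ in the basis $\{Q_0(x),\ldots,Q_{k-1}(x)\}$ by exploiting the two-term matrix recurrence (\ref{eq:semi}), and then read off the entries of $M_Q$ and $N_Q$ by comparison with the generic recurrence (\ref{generalrecur}) and the definitions (\ref{eq:mn}).

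First I would decouple the auxiliary system and derive a scalar recurrence for $\{G_k\}$ alone. The two rows of (\ref{eq:semi}) are
\[
G_k = \alpha_k G_{k-1} + \beta_k (\delta_k x + \theta_k) Q_{k-1}, \qquad Q_k = \gamma_k G_{k-1} + (\delta_k x + \theta_k) Q_{k-1}.
\]
Solving the second equation for $(\delta_k x + \theta_k) Q_{k-1}$ and substituting into the first eliminates the $x$-dependence from the $G$-recurrence, giving $G_k = (\alpha_k - \beta_k \gamma_k) G_{k-1} + \beta_k Q_k$. Unrolling this backward (with the natural initial condition $G_0 = Q_0$, which is consistent with the convention $\beta_0 = 1$) yields the closed-form expansion
\[
G_{k-1}(x) = \sum_{j=0}^{k-1} \beta_j\, (\alpha - \beta\gamma)_{j,k}^{\times}\, Q_j(x),
\]
where the empty product corresponding to $j=k-1$ is $1$.

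Next I would substitute this expansion into the identity $Q_k = (\delta_k x + \theta_k) Q_{k-1} + \gamma_k G_{k-1}$. I would split the result into three pieces: the $\delta_k x Q_{k-1}$ term (which contributes to $N_Q$), the $\theta_k Q_{k-1}$ term together with the $j=k-1$ term of the sum (which combine to give $(\theta_k + \gamma_k \beta_{k-1}) Q_{k-1}$ and hence the superdiagonal entry of $M_Q$), and the remaining terms for $j \leq k-2$ (which give the general off-superdiagonal entries of $M_Q$). Matching the result term by term with (\ref{generalrecur}) reads off $\tau_k = \delta_k$, $a_{k-1,k} = -(\theta_k + \gamma_k \beta_{k-1})$, and $a_{j,k} = -\beta_j (\alpha - \beta\gamma)_{j,k}^{\times}\gamma_k$ for $j \leq k-2$. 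Placing these into (\ref{eq:mn}) produces exactly the matrices displayed in (\ref{eq:mnss}).

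The hard part is really just bookkeeping rather than any structural obstruction: one must use the correct initial condition $G_0 = Q_0$ and the convention $\beta_0 = 1$ so that the $j=0$ term fits the general product formula, and one must be careful that the superdiagonal entry of $M_Q$ absorbs the extra $\gamma_k \beta_{k-1}$ contribution from the $j=k-1$ tail of the sum for $G_{k-1}$ — otherwise the product formula for $a_{jk}$ would not hold uniformly at the superdiagonal. Once the $G$-recurrence is decoupled, everything else is straightforward algebraic identification, mirroring the structure of the quasiseparable proof in Lemma \ref{lemma:mnqs}.
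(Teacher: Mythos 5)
Your proof is correct, but it reaches (\ref{eq:mnss}) by a genuinely different route than the paper. The paper's proof does not decouple the recurrence itself: it quotes Theorem 4.7 of \cite{BOZ11}, which says that a semiseparable system satisfying (\ref{eq:semi}) also satisfies an EGO-type recurrence (\ref{eq:ego}) with the shifted parameters $\alpha_k' = \alpha_{k-1}-\beta_{k-1}\gamma_{k-1}$, $\beta_k' = \beta_{k-1}$, $\gamma_k' = \gamma_k(\alpha_{k-1}-\beta_{k-1}\gamma_{k-1})$, $\theta_k' = \theta_k + \gamma_k\beta_{k-1}$, $\delta_k'=\delta_k$ (with $\tilde G_0 = 0$), and then simply applies the already-proved Lemma \ref{lemma:mnqs} to those parameters; a telescoping of the products $\prod \alpha_i'$ then gives (\ref{eq:aijss}). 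You instead eliminate $(\delta_k x+\theta_k)Q_{k-1}$ between the two rows of (\ref{eq:semi}) to get the $x$-free recursion $G_k = (\alpha_k-\beta_k\gamma_k)G_{k-1}+\beta_k Q_k$, unroll it to the closed form $G_{k-1}=\sum_{j=0}^{k-1}\beta_j(\alpha-\beta\gamma)_{j,k}^{\times}Q_j$, and substitute back into the $Q$-row before matching against (\ref{generalrecur}) and (\ref{eq:mn}). Your argument is self-contained (no appeal to \cite{BOZ11} and no reuse of Lemma \ref{lemma:mnqs}), at the cost of redoing the unrolling that the quasiseparable lemma already encapsulates; the paper's version is shorter but leans on an external classification result. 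One point to make explicit: the initial condition $G_0=Q_0$ is not stated in (\ref{eq:semi}), so you should note that it is the normalization forced by the convention $\beta_0=1$ (equivalently, it is what the $\tilde G_0=0$, $\tilde G_1=\beta_0 Q_0$ convention of the paper's cited conversion amounts to); with any other choice of $G_0$ the $j=0$ term would not fit the uniform product formula (\ref{eq:aijss}).
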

\begin{proof}
From \cite{BOZ11} (see the proof of Theorem 4.7) we know that semiseparable polynomials defined by recurrence relations
(\ref{eq:semi}) are quasiseparable and satisfy the following recurrence relations
of the form (\ref{eq:ego}):
\begin{equation*}
\left[ \begin{array}{c} \tilde{G}_k(x) \\ Q_k(x)  \end{array} \right]
= 
\left[\begin{array}{cc}
    \alpha_{k-1} - \beta_{k-1}\gamma_{k-1} &
    \beta_{k-1} \\
    \gamma_k(\alpha_{k-1} - \beta_{k-1}\gamma_{k-1}) &
    \delta_k x + \theta_k + \gamma_k \beta_{k-1}
    \end{array} \right]
\left[\begin{array}{c} \tilde{G}_{k-1}(x) \\ Q_{k-1}(x) \end{array} \right]
\end{equation*}
with $\tilde{G}_k(x) = G_{k-1}(x),\;\; \tilde{G}_0(x) = 0$.
Now we can apply Lemma \ref{lemma:mnqs} to obtain (\ref{eq:mnss}).
\end{proof}
\noindent The next result is a direct consequence of Lemma \ref{lemma:mgen} and Lemma \ref{lemma:mnss}.

\begin{corollary} \label{lemma:mgenss}
For any $t_1, t_2, \ldots, t_{n-1}$, the matrix
\begin{equation}
\tilde{M}_Q(t_1,t_2,\ldots,t_{n-1}) = \left[
   \begin{array}{cccccc}
           1 & t_1 & a_{0,2} & a_{0,3} & \cdots &   a_{0,n-1} \\
           0 &       1 & t_2 &   a_{1,3} & \ldots &   a_{1,n-1} \\
      \vdots &  \ddots &  \ddots &  \ddots    &  \ddots&      \vdots \\
 \vdots &   &  \ddots &  \ddots    & \ddots &      a_{n-3,n-1} \\

      \vdots &         &   & \ddots  &  1 & t_{n-1} \\
           0 &  \cdots &  \cdots & \cdots &    0 &           1
    \end{array}
\right]
\end{equation}
with
\[
a_{jk} = -\beta_{j} \cdot (\alpha - \beta \gamma)_{j,k}^{\times} \cdot
        \gamma_k,
\]
\[
    \qquad (\alpha - \beta \gamma)_{j,k}^{\times}= \prod_{i=j+1}^{k-1}
    (\alpha_i - \beta_i \gamma_i).
\]
is upper triangular $2$--quasiseparable with generators
\begin{eqnarray*}
d_j &=& 1, \quad j=1,\ldots,n, \\
g_j &=& [\,t_j \;\;\; {-\beta_{j-1}}\,], \quad j = 1,\ldots,n-1, \\
b_i &=& \left[\begin{array}{cc}
            0           & 0         \\
            \gamma_i(\alpha_{i-1} - \beta_{i-1}\gamma_{i-1}) & \alpha_{i-1} - \beta_{i-1}\gamma_{i-1}
        \end{array}\right], \quad i = 2, \ldots, n-1\\
h_k &=& [\,1 \;\; 0\,]^T, \quad k = 2, \ldots, n.
\end{eqnarray*}
\end{corollary}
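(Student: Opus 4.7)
The plan is to deduce the corollary by reducing to the quasiseparable case treated in Lemma \ref{lemma:mgen}. The key observation is that Lemma \ref{lemma:mnss} is itself proved by showing that a semiseparable polynomial system can be reinterpreted as a quasiseparable system with substituted parameters; that same substitution will convert the generators of Lemma \ref{lemma:mgen} into the ones claimed here.

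First I would identify the substitution explicitly. From the EGO-type recurrence produced in the proof of Lemma \ref{lemma:mnss}, the effective quasiseparable parameters are
\[
\tilde\alpha_i = \alpha_{i-1} - \beta_{i-1}\gamma_{i-1}, \qquad
\tilde\beta_j = \beta_{j-1}, \qquad
\tilde\gamma_k = \gamma_k(\alpha_{k-1} - \beta_{k-1}\gamma_{k-1}),
\]
with the convention $\beta_0 = 1$. Plugging these into the formula $a_{jk} = -\tilde\beta_{j+1}\,\tilde\alpha_{j+1,k}^{\times}\,\tilde\gamma_k$ from Lemma \ref{lemma:mgen} and telescoping the $\tilde\alpha$-product with the trailing factor coming from $\tilde\gamma_k$ should reproduce exactly the expression $a_{jk} = -\beta_j\cdot(\alpha-\beta\gamma)_{j,k}^{\times}\cdot\gamma_k$ stated in the corollary. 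This index-shift bookkeeping is essentially the only computational content and is the part most likely to require care.

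Next I would apply the generator construction of Lemma \ref{lemma:mgen} with these tildes. The diagonal entries $d_j = 1$ and the hook vectors $h_k = [1\;\;0]^T$ are unaffected. The row vector $g_j = [\,t_j\;\;-\tilde\beta_j\,]$ becomes $[\,t_j\;\;-\beta_{j-1}\,]$ after the substitution $\tilde\beta_j = \beta_{j-1}$. The translation block
\[
b_i = \begin{bmatrix}0 & 0\\ \tilde\gamma_i & \tilde\alpha_i\end{bmatrix}
\]
becomes precisely
\[
b_i = \begin{bmatrix}0 & 0\\ \gamma_i(\alpha_{i-1} - \beta_{i-1}\gamma_{i-1}) & \alpha_{i-1} - \beta_{i-1}\gamma_{i-1}\end{bmatrix},
\]
matching the statement. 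Since the entry formula (\ref{eq:mqtij}) holds in the quasiseparable setting by Lemma \ref{lemma:mgen}, it holds here with the substituted parameters, which proves the corollary.

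The anticipated main obstacle is not conceptual but notational: tracking the index shifts between $j+1$ and $j$, between $i$ and $i-1$, and between the product range $[j+2,k-1]$ (quasiseparable) and $[j+1,k-1]$ (semiseparable), while verifying that the extra factor $(\alpha_{k-1}-\beta_{k-1}\gamma_{k-1})$ absorbed into $\tilde\gamma_k$ correctly fills in the missing top of the product. Once this verification is done in a single line of telescoping, both the entry formula and the generator identification follow immediately, justifying the claim that the corollary is a direct consequence of the two lemmas.
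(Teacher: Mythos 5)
Your proposal is correct and follows exactly the route the paper intends: the paper states this corollary as a direct consequence of Lemma \ref{lemma:mgen} and Lemma \ref{lemma:mnss}, and your substitution $\tilde\alpha_i = \alpha_{i-1}-\beta_{i-1}\gamma_{i-1}$, $\tilde\beta_j = \beta_{j-1}$, $\tilde\gamma_k = \gamma_k(\alpha_{k-1}-\beta_{k-1}\gamma_{k-1})$ into the generators of Lemma \ref{lemma:mgen} is precisely that deduction. Your telescoping check, in which the factor $(\alpha_{k-1}-\beta_{k-1}\gamma_{k-1})$ carried by $\tilde\gamma_k$ extends the product $\prod_{i=j+1}^{k-2}$ to $\prod_{i=j+1}^{k-1}$, correctly recovers the stated formula for $a_{jk}$.
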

\noindent Using Corollary \ref{lemma:mgenss} with $t_i = -(\theta_i+\gamma_{i}\beta_{i-1})$,
we conclude that semiseparable recurrent matrix $M_Q$ is $2$--quasiseparable.
By letting $t_i = -(\theta_i+\gamma_{i}\beta_{i-1}) - \xi \delta_i$,
we can obtain generators for the matrix $(M_Q - \xi N_Q)$ of semiseparable polynomials $Q$.

\subsection{Well-free polynomials}
In this section we obtain the structure of the recurrence relation matrices
$M_Q$ and $N_Q$ for well-free polynomials defined by (\ref{ewfrr}).
For convenience we denote $\alpha_0=1$ and $\beta_1=0$.
\begin{lemma}
\label{lemma:emnwf}
If a system of well-free polynomials $\{Q\}$ satisfies recurrence
relations (\ref{ewfrr}), then the recurrence relation matrices (\ref{eq:mn})
have the form
\begin{equation}
M_Q=\left[
    \begin{array}{cccccc}
           1 & \delta_1 + \frac{\beta_1}{ \alpha_0} & a_{0,2}  & a_{0,3}& \cdots &   a_{0,n-1} \\
           0 &       1 & \delta_2 + \frac{\beta_2}{ \alpha_1} & a_{1,3} &\ldots &   a_{1,n-1} \\
      \vdots &  \ddots &  \ddots & \ddots    &  \ddots &      \vdots \\
 \vdots &   &  \ddots &  \ddots   & \ddots  &      a_{n-3,n-1} \\
      \vdots &         &   & \ddots & 1 & \delta_{n-1} + \frac{\beta_{n-1}}{ \alpha_{n-2}} \\
           0 &  \cdots &  \cdots &  \cdots  &  0 &           1
    \end{array}
\right],\quad
N_Q=\left[
    \begin{array}{ccccc}
         0 & \alpha_1 &        0 & \cdots &            0 \\
         0 &        0 & \alpha_2 & \ddots &            \vdots \\
         0 &         &        \ddots & \ddots &      0  \\
    \vdots &          &          & \ddots & \alpha_{n-1} \\
         0 &   \cdots &   \cdots &      0 &            0
    \end{array}
\right],
\label{eq:mnwf}
\end{equation}
where
\begin{equation}
a_{jk} = \frac{\alpha_k}{\alpha_{j+2}}\left(\left( \frac{\delta_{j+1}}{\alpha_{j+1}}+\frac{\beta_{j+1}}{\alpha_j \alpha_{j+1}} \right)\beta_{j+2} + \gamma_{j+2} \right)\cdot \left(\frac{\beta}{\alpha}\right)_{j+1,k}^{\times},
\label{eq:aijwf}
\end{equation}
\[
    \qquad  \left(\frac{\beta}{\alpha}\right)_{j+1,k}^{\times}= \prod_{i=j+2}^{k-1}\frac{\beta_{i+1}}{\alpha_{i+1}}.
\]
\end{lemma}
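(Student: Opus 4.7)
The plan is to expand the well-free recurrence (\ref{ewfrr}) into the canonical form (\ref{generalrecur}) by eliminating every occurrence of $xQ_\ell$ with $\ell\leq k-2$ on the right-hand side, and then read off the entries of $M_Q$ and $N_Q$ from the resulting expansion. First I would observe that $\tau_k=\alpha_k$ is immediate, since $\alpha_k xQ_{k-1}$ is the only term on the right of (\ref{ewfrr}) containing an $x$ multiplied by $Q_{k-1}$; this already gives the superdiagonal of $N_Q$ in (\ref{eq:mnwf}). The remaining work is devoted to $M_Q$.

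The main move is to rearrange (\ref{ewfrr}) as
\[
xQ_\ell = \frac{1}{\alpha_{\ell+1}}\bigl[Q_{\ell+1} + \delta_{\ell+1}Q_\ell + \gamma_{\ell+1}Q_{\ell-1} + \beta_{\ell+1}xQ_{\ell-1}\bigr],
\]
and to apply this for $\ell = k-2,k-3,\ldots,0$ in order to the expression $Q_k = \alpha_k xQ_{k-1} - \delta_k Q_{k-1} - \gamma_k Q_{k-2} - \beta_k xQ_{k-2}$. Letting $C_\ell$ denote the coefficient of $xQ_\ell$ just before it is eliminated, we get $C_{k-2}=-\beta_k$ and the recursion $C_{\ell-1}=\frac{\beta_{\ell+1}}{\alpha_{\ell+1}}C_\ell$, which has the closed form
\[
C_\ell = -\beta_k \prod_{i=\ell+2}^{k-1}\frac{\beta_i}{\alpha_i}.
\]
The chain terminates automatically when $xQ_0$ is eliminated, because the convention $\beta_1=0$ kills $C_{-1}$.

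Finally I would collect, for each $0\le j\le k-3$, the three contributions to the coefficient $c_j$ of $Q_j$ coming from the elimination steps that target $xQ_{j-1},xQ_j,xQ_{j+1}$, namely
\[
c_j = \frac{C_{j-1}}{\alpha_j} + \frac{C_j\delta_{j+1}}{\alpha_{j+1}} + \frac{C_{j+1}\gamma_{j+2}}{\alpha_{j+2}}.
\]
Substituting the closed form for $C_\ell$ and factoring out $-\frac{\beta_k}{\alpha_{j+2}}\prod_{i=j+3}^{k-1}\frac{\beta_i}{\alpha_i}$ reduces the bracket to $\bigl(\tfrac{\delta_{j+1}}{\alpha_{j+1}}+\tfrac{\beta_{j+1}}{\alpha_j\alpha_{j+1}}\bigr)\beta_{j+2}+\gamma_{j+2}$, which matches (\ref{eq:aijwf}) after the reindexing identity $\prod_{i=j+2}^{k-1}\frac{\beta_{i+1}}{\alpha_{i+1}}=\frac{\beta_k}{\alpha_k}\prod_{i=j+3}^{k-1}\frac{\beta_i}{\alpha_i}$, once we set $a_{j,k}=-c_j$. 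The boundary cases $j=k-1$ (where $c_{k-1}=-\delta_k+C_{k-2}/\alpha_{k-1}$ gives the superdiagonal entry $\delta_k+\beta_k/\alpha_{k-1}$ of $M_Q$) and $j=k-2$ (where the initial $-\gamma_k$ must be added to the two contributions from eliminating $xQ_{k-3}$ and $xQ_{k-2}$) are checked directly, using the conventions $\alpha_0=1$ and $\beta_1=0$. The principal obstacle is purely bookkeeping: aligning the index shift between the natural $C_\ell$ produced by the elimination and the product $\left(\frac{\beta}{\alpha}\right)_{j+1,k}^{\times}$ as written in the statement.
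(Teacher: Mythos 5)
Your proposal is correct. The paper's own proof is essentially a citation: it invokes Theorem 4.4 of \cite{BEGO13} to assert that the well-free recurrence (\ref{ewfrr}) expands into an $(n-1)$-term recurrence of the form (\ref{eq:lwfqs}), with coefficients written as $g_{j+1}b_{j+2}\cdots b_{k-2}\beta_k$ where $d_k=\frac{\delta_k}{\alpha_k}+\frac{\beta_k}{\alpha_{k-1}\alpha_k}$, $g_k=\frac{d_k\beta_{k+1}+\gamma_{k+1}}{\alpha_{k+1}}$, $b_k=\frac{\beta_{k+1}}{\alpha_{k+1}}$, and then reads off $M_Q$ and $N_Q$ by comparison with (\ref{generalrecur}). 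You instead derive that expansion from scratch by repeatedly solving the recurrence for $xQ_\ell$ and eliminating downward, which is exactly the computation hiding behind the cited theorem: your $C_\ell=-\beta_k\prod_{i=\ell+2}^{k-1}\frac{\beta_i}{\alpha_i}$ reproduces the product $b_{j+2}\cdots b_{k-2}\beta_k$, and your collected coefficient $c_j=\frac{C_{j-1}}{\alpha_j}+\frac{C_j\delta_{j+1}}{\alpha_{j+1}}+\frac{C_{j+1}\gamma_{j+2}}{\alpha_{j+2}}$ reassembles into $-g_{j+1}$ times that product, matching (\ref{eq:aijwf}) after the reindexing you note. Your boundary checks at $j=k-1$ and $j=k-2$, and the observation that $\beta_1=0$ terminates the chain at $xQ_0$, are the right ones. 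The only thing your route buys beyond the paper's is self-containedness — it does not lean on the external classification result — at the cost of the index bookkeeping you acknowledge; the underlying identity being verified is the same in both.
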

\begin{proof}
From \cite{BEGO13} (see the proof of Theorem 4.4) we know that the well-free polynomials $Q_k(x)$ defined by recurrence relations (\ref{ewfrr}) are also quasiseparable satisfying the recurrence relation
\be
\begin{aligned}
Q_k(x)=\alpha_k\:x\:Q_{k-1}(x) & - \left(\delta_k + \frac{\beta_k}{\alpha_{k-1}} \right)Q_{k-1}(x)-(d_{k-1}\beta_k+\gamma_k)Q_{k-2}(x)\\
& -  g_{k-2}\beta_k Q_{k-3}(x)-g_{k-3}b_{k-2}\beta_kQ_{k-4}(x)-g_{k-4}b_{k-3}b_{k-2}\beta_kQ_{k-5}(x) \\
& - \cdots -  g_2b_3b_4\cdots b_{k-2} \beta_k Q_1(x)- g_1b_2b_3\cdots b_{k-2} \beta_k Q_0(x)
\end{aligned}
\label{eq:lwfqs}
\ee
where $d_k=\frac{\delta_k}{\alpha_k}+\frac{\beta_k}{\alpha_{k-1}\alpha_k}$ for $k=2,3,\cdots,n$, $g_k=\frac{d_k\beta_{k+1}+\gamma_{k+1}}{\alpha_{k+1}}$ for $k=1,2,\cdots,n-1$ and $b_k=\frac{\beta_{k+1}}{\alpha_{k+1}}$ for $k=2,3,\cdots,n-1$.
\\
Comparing the recurrence relation (\ref{eq:lwfqs}) with the recurrence relations (\ref{generalrecur}),
one can see that matrices $M_Q$ and $N_Q$ are defined by (\ref{eq:mnwf}).
\end{proof}
\noindent The next result is also a direct consequence of Lemma \ref{lemma:mgen} and Lemma \ref{lemma:emnwf}.

\begin{corollary} \label{lemma:mgenwf}
For any $t_1, t_2, \ldots, t_{n-1}$, the matrix
\begin{equation}
\tilde{M}_Q(t_1,t_2,\ldots,t_{n-1}) = \left[
   \begin{array}{cccccc}
           1 & t_1 & a_{0,2} & a_{0,3} & \cdots &   a_{0,n-1} \\
           0 &       1 & t_2 &   a_{1,3} & \ldots &   a_{1,n-1} \\
      \vdots &  \ddots &  \ddots &  \ddots    &  \ddots&      \vdots \\
 \vdots &   &  \ddots &  \ddots    & \ddots &      a_{n-3,n-1} \\

      \vdots &         &   & \ddots  &  1 & t_{n-1} \\
           0 &  \cdots &  \cdots & \cdots &    0 &           1
    \end{array}
\right]
\end{equation}
with
\[
a_{jk} = \frac{\alpha_k}{\alpha_{j+2}}\left(\left( \frac{\delta_{j+1}}{\alpha_{j+1}}+\frac{\beta_{j+1}}{\alpha_j \alpha_{j+1}} \right)\beta_{j+2} + \gamma_{j+2} \right)\cdot \left(\frac{\beta}{\alpha}\right)_{j+1,k}^{\times},
\]
\[
    \qquad  \left(\frac{\beta}{\alpha}\right)_{j+1,k}^{\times}= \prod_{i=j+2}^{k-1}\frac{\beta_{i+1}}{\alpha_{i+1}}.
\]
is upper triangular $2$--quasiseparable with generators
\begin{eqnarray*}
d_j &=& 1, \quad j=1,\ldots,n, \\
g_j &=& \left[\,t_j \;\;\; \frac{t_j{\beta_{j+1}}}{\alpha_j} + \gamma_{j+1}\, \right], \quad j = 1,\ldots,n-1, \\
b_i &=& \left[\begin{array}{cc}
            0           & 0         \\
            1    & \frac{\beta_{i+1}}{\alpha_i}
        \end{array}\right], \quad i = 2, \ldots, n-1,\\
h_k &=& [\,1 \;\; 0\,]^T, \quad k = 2, \ldots, n.
\end{eqnarray*}
\end{corollary}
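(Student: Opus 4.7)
The plan is to verify the claimed quasiseparable representation of $\tilde M_Q(t_1,\ldots,t_{n-1})$ directly, in the same spirit as the proofs of Lemma \ref{lemma:mgen} and Corollary \ref{lemma:mgenss}. Lemma \ref{lemma:emnwf} supplies the explicit entries $a_{jk}$ via (\ref{eq:aijwf}); what remains is to check that substituting the proposed generators $(d_j, g_j, b_i, h_k)$ into the evaluation formula (\ref{eq:mqtij}) reproduces these entries. The easy cases $k < j$, $k = j$, and $k = j+1$ are immediate by inspection: respectively the lower-triangular zeros, the diagonal entry $d_j = 1$, and the superdiagonal entry $g_j h_{j+1} = t_j$.

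For the substantive case $k > j + 1$ I would first compute the chain product $b_{j+1} b_{j+2} \cdots b_{k-1}$. Because every
$b_i = \left[\begin{smallmatrix}0 & 0 \\ 1 & \beta_{i+1}/\alpha_i\end{smallmatrix}\right]$
has a zero first row, the product also has zero first row, and a short induction on the number of factors gives that its bottom row equals $\bigl[\prod_{i=j+1}^{k-2} \beta_{i+1}/\alpha_i,\ \prod_{i=j+1}^{k-1} \beta_{i+1}/\alpha_i\bigr]$. Sandwiching between $g_j$ on the left and $h_k = [\,1\ 0\,]^T$ on the right isolates the second coordinate of $g_j$ times the $(2,1)$ entry of the product, producing
\[
g_j b_{j+1}\cdots b_{k-1} h_k \;=\; \left(\frac{t_j \beta_{j+1}}{\alpha_j} + \gamma_{j+1}\right)\prod_{i=j+1}^{k-2} \frac{\beta_{i+1}}{\alpha_i}.
\]

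The final step is to reconcile this expression with the formula (\ref{eq:aijwf}) for $a_{j-1,k-1}$. The telescoping identity
$\prod_{i=j+1}^{k-2}\beta_{i+1}/\alpha_i = (\alpha_{k-1}/\alpha_{j+1}) \prod_{i=j+1}^{k-2} \beta_{i+1}/\alpha_{i+1}$
extracts both the scalar prefactor $\alpha_{k-1}/\alpha_{j+1}$ and the product $(\beta/\alpha)^{\times}_{j+1,k}$ appearing in (\ref{eq:aijwf}), so the bookkeeping of the products is painless. The remaining point, and the subtlest one, is matching the coefficient $t_j \beta_{j+1}/\alpha_j + \gamma_{j+1}$ with the parenthetical coefficient $((\delta_j/\alpha_j + \beta_j/(\alpha_{j-1}\alpha_j))\beta_{j+1} + \gamma_{j+1})$ in (\ref{eq:aijwf}); these coincide under the natural identification $t_j = \delta_j + \beta_j/\alpha_{j-1}$ already supplied by Lemma \ref{lemma:emnwf} as the $(j,j+1)$ entry of $M_Q$. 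The main ``obstacle'' is thus purely index bookkeeping in the products rather than any genuinely nontrivial computation; the result then falls out as the well-free analogue of Corollary \ref{lemma:mgenss}.
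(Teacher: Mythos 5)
Your computation of the generator products is correct, and since the paper offers no proof at all for this corollary beyond declaring it ``a direct consequence of Lemma \ref{lemma:mgen} and Lemma \ref{lemma:emnwf},'' an explicit verification of the kind you attempt is exactly what is needed. The chain product of the $b_i$, the resulting identity $g_j b_{j+1}\cdots b_{k-1}h_k = \bigl(\tfrac{t_j\beta_{j+1}}{\alpha_j}+\gamma_{j+1}\bigr)\prod_{i=j+1}^{k-2}\tfrac{\beta_{i+1}}{\alpha_i}$, and the telescoping that extracts the prefactor $\alpha_{k-1}/\alpha_{j+1}$ are all right.

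The gap is in your final step, and it is not mere index bookkeeping. The corollary asserts the generator representation \emph{for any} $t_1,\ldots,t_{n-1}$, with the entries $a_{jk}$ fixed by (\ref{eq:aijwf}) independently of the $t_i$; but your own computation shows that the $(j,k)$ entry produced by the generators for $k>j+1$ carries the factor $\tfrac{t_j\beta_{j+1}}{\alpha_j}+\gamma_{j+1}$ and hence \emph{does} depend on $t_j$ (for $k=j+2$ the accompanying product is empty, so that entry is exactly $\tfrac{t_j\beta_{j+1}}{\alpha_j}+\gamma_{j+1}$), whereas $a_{j-1,k-1}$ does not. The two therefore agree only for the single value $t_j=\delta_j+\beta_j/\alpha_{j-1}$ whenever $\beta_{j+1}\neq 0$; imposing that ``natural identification'' proves the representation for $M_Q$ itself, not the quantified statement. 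This matters concretely, because the corollary is immediately invoked with $t_i=\delta_i+\beta_i/\alpha_{i-1}-\xi\alpha_i$ to obtain generators for $M_Q-\xi N_Q$: with that choice the generator formula (\ref{eq:mqtij}) produces entries differing from $a_{j-1,k-1}$ by terms proportional to $\xi\beta_{j+1}$, so the listed generators do not represent $M_Q-\xi N_Q$. Contrast this with Lemma \ref{lemma:mgen} and Corollary \ref{lemma:mgenss}, where the second component of $g_j$ is independent of $t_j$ and the ``for any $t_j$'' claim is harmless. You should flag this discrepancy rather than absorb it: either the statement must be repaired (e.g.\ by making the second entry of $g_j$ independent of $t_j$, or by letting the displayed $a_{jk}$ depend on the $t_i$), or your argument must be restricted to the one value of $t_j$ for which it actually closes.
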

\noindent Using Corollary \ref{lemma:mgenwf} with $t_i = \left(\delta_i+\frac{\beta_{i}}{\alpha_{i-1}} \right)$,
we conclude that well-free recurrent matrix $M_Q$ is $2$--quasiseparable.
By letting $t_i = \left(\delta_i+\frac{\beta_{i}}{\alpha_{i-1}} \right) - \xi \alpha_i$,
we can obtain generators for the matrix $(M_Q - \xi N_Q)$ of well-free polynomials $Q$.


\section{Recurrence relations for associated polynomials} \label{sec:assoc}
Fast Traub-like algorithms (see e.g. \cite{BEGOTZ10, BEGOT12, O01, GO94, CR93, P64, T66}) were derived by using the properties of so called associate or Horner polynomials. It is known that these Horner polynomials determine the structure of the inversion of polynomial Vandermonde matrices leading an efficient inversion algorithm with cost $\mathcal{O}(n^2)$ operations. 
\\\\
In this section we derive the recurrence relations for associated polynomials for the systems of quasiseparable, semiseparable and well-free polynomials. Later, we use these recurrence relations to compute the displacement operator $W_{\widehat{Q}}= N_{\widehat{Q}} \:M_{\widehat{Q}}^{-1}$, basis transformation matrix $S_{P\:\widehat{Q}}$, and Vandermonde matrix $V_{\widehat{Q}}$ for the the system of Horner polynomials $\{\widehat{Q}\}$. 
\begin{lemma}\label{lemma:hatqs}
Let $\{Q\}$ be a system of quasiseparable polynomials satisfying recurrence
relations (\ref{eq:ego}). Then the system of generalized associated polynomials
$\widehat{Q}=\{\widehat{Q}_0(x), \widehat{Q}_1(x),\cdots,\widehat{Q}_{n-1}(x)\}$ is also quasiseparable and satisfies recurrence relations
\begin{equation}
\left[ \begin{array}{c} \widehat{G}_k(x) \\ \widehat{Q}_k(x)  \end{array} \right]
= 
\left[\begin{array}{cc} \widehat{\alpha}_k & \widehat{\beta}_k \\
        \widehat{\gamma}_k & \widehat{\delta}_k x + \widehat{\theta}_k \end{array} \right]
\left[\begin{array}{c} \widehat{G}_{k-1}(x) \\ \widehat{Q}_{k-1}(x) \end{array} \right],
    \quad k = 1,\ldots,n-1
\label{eq:hatego}
\end{equation}
with
\begin{eqnarray}
\widehat{\alpha}_k &=& \alpha_{n-k+1}, \notag \\
\widehat{\beta}_k  &=& \frac{\gamma_{n-k+1}}{\delta_{n-k+1}}, \notag \\
\widehat{\gamma}_k &=& \beta_{n-k+1} \cdot \delta_{n-k}, \\
\widehat{\delta}_k &=& \delta_{n-k}, \notag \\
\widehat{\theta}_k &=& \frac{\delta_{n-k}}{\delta_{n-k+1}} \theta_{n-k+1}. \notag
\end{eqnarray}
\end{lemma}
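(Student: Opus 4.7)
The plan is to verify, by a direct index-shift computation, that the coefficients of the recurrence (\ref{generalrecur}) for the associated system $\widehat{Q}$ admit the factored form identified in Lemma \ref{lemma:mnqs}; once they do, the converse of that lemma immediately produces an EGO-type two-term recurrence (\ref{eq:hatego}) for $\widehat{Q}$, with $\widehat{\alpha}_k,\widehat{\beta}_k,\widehat{\gamma}_k,\widehat{\delta}_k,\widehat{\theta}_k$ read directly off the factorization.

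First I would combine the definition (\ref{eq:hata}) of $\widehat{\tau}_k,\widehat{a}_{jk}$ with the explicit formulas of Lemma \ref{lemma:mnqs}, namely $\tau_k = \delta_k$, $a_{k-1,k} = -\theta_k$, and $a_{j,k} = -\beta_{j+1}\alpha_{j+1,k}^{\times}\gamma_k$ for $j\le k-2$, where $\alpha_{j+1,k}^{\times} = \prod_{i=j+2}^{k-1}\alpha_i$. Substituting yields $\widehat{\tau}_k = \delta_{n-k}$ together with
\[
\widehat{a}_{k-1,k} \;=\; -\frac{\delta_{n-k}}{\delta_{n-k+1}}\,\theta_{n-k+1},
\qquad
\widehat{a}_{j,k} \;=\; -\frac{\delta_{n-k}}{\delta_{n-j}}\,\beta_{n-k+1}\,\alpha_{n-k+1,n-j}^{\times}\,\gamma_{n-j}
\]
for $j\le k-2$, where the remaining product is over $i = n-k+2,\ldots, n-j-1$.

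Next I would try to put $\widehat{a}_{j,k}$ into the shape $-\widehat{\beta}_{j+1}\,\widehat{\alpha}_{j+1,k}^{\times}\,\widehat{\gamma}_k$ demanded by Lemma \ref{lemma:mnqs}. Under the substitution $m = n-i+1$, the product $\prod_{i=j+2}^{k-1}\widehat{\alpha}_i$ reindexes to $\prod_{m=n-k+2}^{n-j-1}\alpha_{n-m+1}$, so the choice $\widehat{\alpha}_k = \alpha_{n-k+1}$ turns it into precisely $\alpha_{n-k+1,n-j}^{\times}$, matching the block already sitting inside $\widehat{a}_{j,k}$. The remaining scalar prefactor $\delta_{n-k}\beta_{n-k+1}\gamma_{n-j}/\delta_{n-j}$ then splits as $\widehat{\gamma}_k\cdot\widehat{\beta}_{j+1}$ if we set $\widehat{\gamma}_k = \beta_{n-k+1}\delta_{n-k}$ and $\widehat{\beta}_k = \gamma_{n-k+1}/\delta_{n-k+1}$. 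Comparing the subdiagonal coefficient $\widehat{a}_{k-1,k}$ against $-\widehat{\theta}_k$, and $\widehat{\tau}_k$ against $\widehat{\delta}_k$, forces $\widehat{\delta}_k = \delta_{n-k}$ and $\widehat{\theta}_k = \delta_{n-k}\theta_{n-k+1}/\delta_{n-k+1}$, exactly the values claimed in the statement.

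With the factorization in hand, invoking Lemma \ref{lemma:mnqs} in the converse direction — that is, reading the explicit expansion displayed in its proof as the unfolding of the $2\times 2$ transfer recurrence (\ref{eq:ego}) — delivers (\ref{eq:hatego}) for $\widehat{Q}$ with the asserted parameters, and in particular proves that $\widehat{Q}$ is quasiseparable. I do not foresee any real obstacle: the argument is essentially bookkeeping for the index reversal $k\leftrightarrow n-k$. The one spot that requires a little care is the mild asymmetry that $\widehat{\beta}$ carries a quotient $\gamma/\delta$ while $\widehat{\gamma}$ carries a product $\beta\delta$; this is a direct consequence of the fact that $\beta$ and $\gamma$ occupy the off-diagonal entries of the EGO transfer matrix, whereas $\delta$ scales only the $x$-bearing diagonal entry, so after the reversal each $\delta$ must be redistributed between a $\widehat{\beta}$ and a $\widehat{\gamma}$ sitting at opposite ends of the associated product.
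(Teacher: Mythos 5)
Your proposal is correct and follows essentially the same route as the paper's proof: compute $\widehat{a}_{jk}$ and $\widehat{a}_{k-1,k}$ from the definition (\ref{eq:hata}) using the explicit coefficients of Lemma \ref{lemma:mnqs}, verify they again have the factored form $-\widehat{\beta}_{j+1}\widehat{\alpha}_{j+1,k}^{\times}\widehat{\gamma}_k$ and $-\widehat{\theta}_k$, and then read off the EGO parameters by invoking Lemma \ref{lemma:mnqs} in the converse direction. The only blemish is a notational slip in the reindexing step (the reindexed factor should be $\widehat{\alpha}_{n-m+1}=\alpha_m$, not $\alpha_{n-m+1}$), which does not affect the argument.
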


\begin{proof}
From Lemma \ref{lemma:mnqs} we know that the quasiseparable polynomials $Q$ satisfy general recurrence
relations (\ref{generalrecur}) with $a_{jk}$ defined by (\ref{eq:qsaij})
for $j < k-1$ and $a_{k-1,k} = -\theta_k$.
By definition (\ref{eq:hata}), for $j < k-1$ we can write
\begin{equation*}
\begin{split}
\widehat{a}_{jk} = \frac{\delta_{n-k}}{\delta_{n-j}} a_{n-k,n-j}
=
-\frac{\gamma_{n-j}}{\delta_{n-j}}  \alpha_{n-k+1,n-j}^{\times} \beta_{n-k+1}\delta_{n-k}
= -\widehat{\beta}_{j+1} \widehat{\alpha}_{j+1,k}^{\times} \widehat{\gamma}_{k}.
\end{split}
\end{equation*}
For $j = k-1$,
\[
    \widehat{a}_{k-1,k} = \frac{\delta_{n-k}}{\delta_{n-k+1}} a_{n-k,n-k+1}
    = \frac{\delta_{n-k}}{\delta_{n-k+1}} \theta_{n-k+1} = -\widehat{\theta}_k
\]
Using Lemma \ref{lemma:mnqs} we conclude that the associated polynomials $\widehat{Q}$ satisfy (\ref{eq:hatego}).
\end{proof}
\noindent The next result shows the recurrence relations for associate polynomials corresponding to the semiseparable polynomials defined by recurrence relation (\ref{eq:semi}). 
\begin{lemma}\label{lemma:hatss}
Let $\{Q\}$ be a system of semiseparable polynomials satisfying recurrence
relations (\ref{eq:semi}). Then the system of generalized associated polynomials $\widehat{Q}=\{\widehat{Q}_0(x), \widehat{Q}_1(x),\cdots,\widehat{Q}_{n-1}(x)\}$ is also semiseparable and satisfies recurrence relations

\begin{equation}\label{eq:hatsemi}
\left[ \begin{array}{c} \widehat{G}_k(x) \\ \widehat{Q}_k(x)  \end{array} \right]
= 
\left[\begin{array}{cc} \widehat{\alpha}_k & \widehat{\beta}_k \\
        \widehat{\gamma}_k & 1 \end{array} \right]
\left[\begin{array}{c} \widehat{G}_{k-1}(x) \\
        \left(\widehat{\delta}_k x + \widehat{\theta}_k \right) \widehat{Q}_{k-1}(x) \end{array} \right],
\quad k = 1,\ldots,n-1
\end{equation}
with
\begin{eqnarray*}
\widehat{\alpha}_k &=& \alpha_{n-k}, \\
\widehat{\beta}_k &=& \frac{\gamma_{n-k}}{\delta_{n-k}}, \\
\widehat{\gamma}_k &=& \beta_{n-k}\cdot\delta_{n-k}, \\
\widehat{\delta}_k &=& \delta_{n-k}, \\
\widehat{\theta}_k &=& \frac{\delta_{n-k}}{\delta_{n-k+1}}\theta_{n-k+1}.
\end{eqnarray*}
\end{lemma}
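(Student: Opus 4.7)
The plan is to mirror the proof of Lemma \ref{lemma:hatqs}. First, read off via Lemma \ref{lemma:mnss} the coefficients $a_{jk}$ and the leading coefficients $\tau_k=\delta_k$ of the general recurrence (\ref{generalrecur}) satisfied by the semiseparable system $Q$. Second, apply the defining formula (\ref{eq:hata}) to obtain the coefficients $\widehat{a}_{jk}$ and $\widehat{\tau}_k=\delta_{n-k}$ of the general recurrence satisfied by $\widehat{Q}$. Third, verify that these coincide with the coefficients that Lemma \ref{lemma:mnss} would assign to a semiseparable system built from the proposed hat parameters; since the general recurrence (together with $\widehat{Q}_0=\widehat{\tau}_0$) determines $\widehat{Q}$ uniquely, this forces $\widehat{Q}$ to satisfy the semiseparable recurrence (\ref{eq:hatsemi}).

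The verification splits into two cases. For $j<k-1$, Lemma \ref{lemma:mnss} gives $a_{n-k,n-j}=-\beta_{n-k}(\alpha-\beta\gamma)_{n-k,n-j}^{\times}\gamma_{n-j}$, so (\ref{eq:hata}) produces
\[
\widehat{a}_{jk}=\frac{\delta_{n-k}}{\delta_{n-j}}\,a_{n-k,n-j}=-\frac{\gamma_{n-j}}{\delta_{n-j}}\,(\alpha-\beta\gamma)_{n-k,n-j}^{\times}\,\beta_{n-k}\delta_{n-k}.
\]
The central identity is
\[
\widehat{\alpha}_i-\widehat{\beta}_i\widehat{\gamma}_i
=\alpha_{n-i}-\frac{\gamma_{n-i}}{\delta_{n-i}}\,\beta_{n-i}\delta_{n-i}
=\alpha_{n-i}-\beta_{n-i}\gamma_{n-i},
\]
so the reindexing $i=n-m$ yields $(\alpha-\beta\gamma)_{n-k,n-j}^{\times}=(\widehat{\alpha}-\widehat{\beta}\widehat{\gamma})_{j,k}^{\times}$, and hence $\widehat{a}_{jk}=-\widehat{\beta}_j(\widehat{\alpha}-\widehat{\beta}\widehat{\gamma})_{j,k}^{\times}\widehat{\gamma}_k$, exactly matching (\ref{eq:aijss}) for the hat system. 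For $j=k-1$, a direct substitution using $\widehat{\theta}_k=\frac{\delta_{n-k}}{\delta_{n-k+1}}\theta_{n-k+1}$, $\widehat{\gamma}_k=\beta_{n-k}\delta_{n-k}$ and $\widehat{\beta}_{k-1}=\frac{\gamma_{n-k+1}}{\delta_{n-k+1}}$ gives $\widehat{a}_{k-1,k}=-(\widehat{\theta}_k+\widehat{\gamma}_k\widehat{\beta}_{k-1})$, again matching (\ref{eq:mnss}).

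The main obstacle is purely bookkeeping — tracking the index reversal $k\mapsto n-k+1$ and the shift between $\widehat{\beta}_{k-1}$ and $\beta_{n-k}$ that distinguishes the semiseparable convention from the quasiseparable one — rather than any substantive algebra. No identity beyond $\widehat{\alpha}_i-\widehat{\beta}_i\widehat{\gamma}_i=\alpha_{n-i}-\beta_{n-i}\gamma_{n-i}$ is required, and this one falls out immediately from the proposed definitions of the hat parameters.
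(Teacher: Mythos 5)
Your proposal is correct and follows essentially the same route as the paper's own proof: read off the general-recurrence coefficients from Lemma \ref{lemma:mnss}, push them through the definition (\ref{eq:hata}), and match the result against the semiseparable form of Lemma \ref{lemma:mnss}, splitting into the cases $j<k-1$ and $j=k-1$. Your explicit isolation of the identity $\widehat{\alpha}_i-\widehat{\beta}_i\widehat{\gamma}_i=\alpha_{n-i}-\beta_{n-i}\gamma_{n-i}$ and of the reindexing of the product is exactly the computation the paper performs implicitly in its displayed chain of equalities.
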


\begin{proof}
From Lemma \ref{lemma:mnss} we know that the semiseparable polynomials $Q$ satisfy general recurrence
relations (\ref{generalrecur}) with $a_{jk}$ defined by (\ref{eq:aijss})
for $j < k - 1$ and $a_{k-1,k} = -(\theta_k + \gamma_k \beta_{k-1})$.
By definition (\ref{eq:hata}), for $j < k - 1$ we can write
\[
\widehat{a}_{jk} = \frac{\delta_{n-k}}{\delta_{n-j}} a_{n-k,n-j}
= -\frac{\gamma_{n-j}}{\delta_{n-j}}
(\alpha - \beta \gamma)^\times_{n-k,n-j}
\delta_{n-k}\beta_{n-k} 
= -\widehat{\beta_j} (\widehat{\alpha} - \widehat{\beta}\widehat{\gamma})^\times_{j,k}\widehat{\gamma_k}.
\]
For $j = k - 1$, we get
\[
\widehat{a}_{k-1,k} = \frac{\delta_{n-k}}{\delta_{n-k+1}} a_{n-k,n-k+1}
    = -\frac{\delta_{n-k}}{\delta_{n-k+1}} (\theta_{n-k+1} + \gamma_{n-k+1}\beta_{n-k}) 
    = -(\widehat\theta_k + \widehat\gamma_k\widehat\beta_{k-1} ).
\]
From Lemma \ref{lemma:mnss} we conclude that the associated polynomials $\widehat{Q}$ satisfy (\ref{eq:hatsemi}).
\end{proof}
\noindent The following shows the recurrence relations for associate polynomials corresponding to the well-free polynomials defined by recurrence relation (\ref{ewfrr}).

%

\begin{lemma}\label{lemma:hatwf}
Let $\{Q\}$ be a system of quasiseparable polynomials satisfying recurrence
relations (\ref{eq:ego}) where $\beta_k \neq 0$.  
Then the system of generalized associated polynomials
$\widehat{Q}$ satisfies recurrence relations
\begin{eqnarray}
\widehat{Q}_1(x) &=& (\widehat{\alpha}_1 x - \widehat{\delta}_1)\widehat{Q}_0(x), \nonumber \\
\widehat{Q}_k(x) &=& (\widehat{\alpha}_k x - \widehat{\delta}_k) \widehat{Q}_{k-1}(x)
              - (\widehat{\beta}_k x + \widehat{\gamma}_k)\widehat{Q}_{k-2}(x), 
\quad k\geq 2
\label{hatwfrr}
\end{eqnarray}
with
\begin{eqnarray}\label{eq:wfcoef}
\widehat{\alpha}_k &=& \delta_{n-k}, \notag \\
\widehat{\beta}_k &=& \delta_{n-k}\alpha_{n-k+2}\frac{\beta_{n-k+1}}{\beta_{n-k+2}}, \notag \\
\widehat{\delta}_k &=& -\frac{\delta_{n-k}}{\delta_{n-k+1}}\left(\theta_{n-k+1} +
                \alpha_{n-k+2}\frac{\beta_{n-k+1}}{\beta_{n-k+2}}\right), \\
\widehat{\gamma}_k &=& \frac{\delta_{n-k}}{\delta_{n-k+2} } \cdot
                        \frac{\beta_{n-k+1}}{\beta_{n-k+2}} \cdot
                        ( \theta_{n-k+2}\alpha_{n-k+2} - 
                             \beta_{n-k+2} \gamma_{n-k+2} ). \notag
\end{eqnarray}
\end{lemma}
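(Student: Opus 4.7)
The plan is to reduce the claim to a verification using the general recurrence (\ref{generalrecur}). By Lemma \ref{lemma:mnqs} applied to $\{Q\}$, the coefficients of (\ref{generalrecur}) are $\tau_k = \delta_k$, $a_{k-1,k} = -\theta_k$, and $a_{jk} = -\beta_{j+1}\alpha_{j+1,k}^{\times}\gamma_k$ for $j<k-1$. Substituting into the definition (\ref{eq:hata}) immediately gives the corresponding coefficients for $\{\widehat Q\}$:
$\widehat{\tau}_k = \delta_{n-k}$, $\widehat{a}_{k-1,k} = -\frac{\delta_{n-k}}{\delta_{n-k+1}}\theta_{n-k+1}$, and for $j<k-1$,
\[
\widehat{a}_{jk} \;=\; -\frac{\delta_{n-k}}{\delta_{n-j}}\,\beta_{n-k+1}\,\alpha_{n-k+1,n-j}^{\times}\,\gamma_{n-j}.
\]

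Next, to show that $\{\widehat Q\}$ actually satisfies the three-term form (\ref{hatwfrr}), I would expand its right-hand side and use (\ref{generalrecur}) at index $k-1$ to substitute $x\widehat{Q}_{k-2}(x) = \widehat{\tau}_{k-1}^{-1}\bigl(\widehat Q_{k-1}(x) + \sum_{j=0}^{k-2}\widehat{a}_{j,k-1}\widehat Q_j(x)\bigr)$. Equating the resulting expression with the general recurrence for $\widehat Q_k$ in (\ref{generalrecur}) yields the matching conditions
\begin{align*}
\widehat{\alpha}_k &= \widehat{\tau}_k, \\
\widehat{\delta}_k + \widehat{\beta}_k/\widehat{\tau}_{k-1} &= \widehat{a}_{k-1,k}, \\
\widehat{\gamma}_k + (\widehat{\beta}_k/\widehat{\tau}_{k-1})\,\widehat{a}_{k-2,k-1} &= \widehat{a}_{k-2,k}, \\
(\widehat{\beta}_k/\widehat{\tau}_{k-1})\,\widehat{a}_{j,k-1} &= \widehat{a}_{j,k}, \qquad 0\le j\le k-3.
\end{align*}

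The last family of conditions is the crux of the argument and the main obstacle: it demands that the ratio $\widehat{a}_{j,k}/\widehat{a}_{j,k-1}$ be independent of $j$. Using the explicit formula above, this ratio reduces to $\frac{\delta_{n-k}}{\delta_{n-k+1}}\cdot\frac{\beta_{n-k+1}}{\beta_{n-k+2}}\cdot \bigl(\alpha_{n-k+1,n-j}^{\times}/\alpha_{n-k+2,n-j}^{\times}\bigr)$, and the product quotient telescopes to the single factor $\alpha_{n-k+2}$. This confirms $j$-independence (the hypothesis $\beta_k\neq 0$ ensures the ratio is well defined) and forces $\widehat{\beta}_k = \delta_{n-k}\,\alpha_{n-k+2}\,\beta_{n-k+1}/\beta_{n-k+2}$, which matches the claimed value.

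Once $\widehat{\alpha}_k$ and $\widehat{\beta}_k$ are fixed, the remaining two conditions determine $\widehat{\delta}_k$ and $\widehat{\gamma}_k$ by direct substitution; the calculation for $\widehat{\gamma}_k$ uses $\alpha_{n-k+1,n-k+2}^{\times}=1$ (an empty product), so $\widehat{a}_{k-2,k} = -\frac{\delta_{n-k}}{\delta_{n-k+2}}\beta_{n-k+1}\gamma_{n-k+2}$, and both stated formulas emerge after factoring $\frac{\delta_{n-k}}{\delta_{n-k+2}}\cdot\frac{\beta_{n-k+1}}{\beta_{n-k+2}}$. The base case $k=1$ is immediate from (\ref{generalrecur}), which at that index contains only the single term $\widehat{\tau}_1 x\widehat{Q}_0 - \widehat{a}_{0,1}\widehat{Q}_0$ and thus gives $\widehat{\alpha}_1 = \widehat{\tau}_1 = \delta_{n-1}$ and $\widehat{\delta}_1 = \widehat{a}_{0,1}$, consistent with (\ref{eq:wfcoef}).
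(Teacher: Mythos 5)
Your proof is correct, but its second half follows a genuinely different route from the paper's. Both arguments begin identically: the general-recurrence coefficients $\widehat a_{jk}$ of $\widehat Q$ are obtained from Lemma \ref{lemma:mnqs} together with the definition (\ref{eq:hata}). From there the paper invokes Lemma \ref{lemma:emnwf} as a black box: it takes the closed-form expression for the general-recurrence coefficients of an arbitrary well-free system, substitutes the candidate generators (\ref{eq:wfcoef}) into that expression, and verifies through a lengthy algebraic simplification that the result reproduces $\widehat a_{jk}$. You instead eliminate $x\widehat Q_{k-2}$ using (\ref{generalrecur}) at index $k-1$ and read off the matching conditions directly, which in effect re-derives the needed portion of Lemma \ref{lemma:emnwf} in recursive form. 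The payoff is that your argument is a derivation rather than a verification: the telescoping identity $\widehat a_{j,k} = (\widehat\beta_k/\widehat\tau_{k-1})\,\widehat a_{j,k-1}$ for $j\le k-3$ explains where the formula for $\widehat\beta_k$ comes from and why the hypothesis $\beta_k\neq 0$ is required, whereas in the paper these facts only emerge after the computation. Two small cautions. First, rather than arguing that the ratio $\widehat a_{j,k}/\widehat a_{j,k-1}$ is well defined (it can be $0/0$ if some $\gamma_{n-j}$ or some $\alpha_i$ vanishes), verify the product identity $(\widehat\beta_k/\widehat\tau_{k-1})\,\widehat a_{j,k-1}=\widehat a_{j,k}$ directly with the stated $\widehat\beta_k$; the computation is the same and avoids the division, and the hypothesis $\beta_k\neq 0$ is then needed only so that $\widehat\beta_k$ itself is defined. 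Second, you implicitly divide by $\widehat\tau_{k-1}=\delta_{n-k+1}$, which is legitimate because $\deg Q_k=k$ forces every $\delta_k$ to be nonzero, but this deserves a word.
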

\begin{proof}
Since $\{Q\}$ is the system of quasiseparable polynomials satisfying (\ref{eq:ego}) then by Lemma \ref{lemma:hatqs} and Lemma \ref{lemma:mnqs} we know that its associated system of polynomials $\{\widehat{Q}\}$ satisfy 
\begin{equation}\label{eq:genrechat}
\widehat Q_k(x) = \widehat\alpha_k \cdot x \cdot \widehat Q_{k-1}(x)
                  - \widehat a_{k-1,k}\widehat Q_{k-1}(x)- \widehat a_{k-2,k}\widehat Q_{k-2}(x)-\cdots-\widehat a_{0,k}\widehat Q_{0}(x)
\end{equation}
with 
\begin{equation}\label{eq:ajkwant}
\widehat{a}_{jk} = -\frac{\gamma_{n-j}}{\delta_{n-j}} \cdot
    \left(\prod_{i=j+2}^{k-1} \alpha_{n-i+1} \right) \cdot
    \beta_{n-k+1}\delta_{n-k}, \quad j < k -1,
\end{equation}
and
\begin{equation}\label{eq:ajkwant2}
\widehat{a}_{k-1,k} = -\frac{\delta_{n-k}}{\delta_{n-k+1}}\theta_{n-k+1}.
\end{equation}
By Lemma \ref{lemma:emnwf}, if a system of polynomials $\{\widehat{Q}\}$ satisfies 3-term recurrence relations of the form (\ref{ewfrr}) having generators $\widehat\alpha_k, \widehat\beta_k, \widehat\gamma_k$ and $\widehat\delta_k$  then the system satisfies the general recurrence relation of the form (\ref{eq:genrechat}) with
\begin{equation}
    \widehat a_{jk} = \frac{\widehat\alpha_k}{\widehat\alpha_{j+2}}
    \left[
        \left( \frac{\widehat\delta_{j+1}}{\widehat\alpha_{j+1}}
            + \frac{\widehat\beta_{j+1}}{\widehat\alpha_j\widehat\alpha_{j+1}}
        \right)\widehat\beta_{j+2} + \widehat\gamma_{j+2}
    \right]
    \cdot
    \prod_{i = j+2}^{k-1} \frac{\widehat\beta_{i+1}}{\widehat\alpha_{i+1}},
  \quad j < k - 1,
\label{eq:wfajk}
\end{equation}
and
\begin{equation}\label{eq:wfajk2}
\widehat{a}_{k-1,k} = \widehat\delta_k + \frac{\widehat\beta_k}{\widehat\alpha_{k-1}} .
\end{equation}
Thus to complete the proof we need to show that $\widehat{a}_{jk}$ defined via (\ref{eq:wfajk}) and (\ref{eq:wfajk2}) coincides with (\ref{eq:ajkwant}) and (\ref{eq:ajkwant2}). Let us begin with the case $j < k - 1$:
\begin{align*}
    \widehat a_{jk} &= \frac{\widehat\alpha_k}{\widehat\alpha_{j+2}}
    \left[
        \left( \frac{\widehat\delta_{j+1}}{\widehat\alpha_{j+1}}
            + \frac{\widehat\beta_{j+1}}{\widehat\alpha_j\widehat\alpha_{j+1}}
        \right)\widehat\beta_{j+2} + \widehat\gamma_{j+2}
    \right]
    \cdot
    \prod_{i = j+2}^{k-1} \frac{\widehat\beta_{i+1}}{\widehat\alpha_{i+1}}
\displaybreak[0] \\ &=
    \frac{\delta_{n-k}}{\delta_{n-j-2}} \Bigg[
        \left(
           -\frac{\delta_{n-j-1}}{\delta_{n-j-1} \delta_{n-j}}
           (\theta_{n-j} + \frac{\alpha_{n-j+1}\beta_{n-j}}{\beta_{n-j+1}})
           + 
            \frac{\delta_{n-j-1}\alpha_{n-j+1}\beta_{n-j}}
            {\delta_{n-j}\delta_{n-j-1}\beta_{n-j+1}}
        \right) \cdot \frac{\delta_{n-j-2}\alpha_{n-j}\beta_{n-j-1}}{\beta_{n-j}} \\
        &\qquad +\frac{\delta_{n-j-2}\beta_{n-j-1}}{\delta_{n-j}\beta_{n-j}}
         (\theta_{n-j}\alpha_{n-j} - \beta_{n-j}\gamma_{n-j}) \Bigg]
         \cdot \prod_{i=j+2}^{k-1}\frac{\delta_{n-i-1}\alpha_{n-i+1}\beta_{n-i}}{\delta_{n-i-1}\beta_{n-i+1}}
\displaybreak[0] \\ &=
    \frac{\delta_{n-k}}{\delta_{n-j-2}} \Bigg[
        \left(
           -\frac{1}{\delta_{n-j}}
           (\theta_{n-j} + \frac{\alpha_{n-j+1}\beta_{n-j}}{\beta_{n-j+1}})
           + 
            \frac{\alpha_{n-j+1}\beta_{n-j}}
            {\delta_{n-j}\beta_{n-j+1}}
        \right) \cdot \frac{\delta_{n-j-2}\alpha_{n-j}\beta_{n-j-1}}{\beta_{n-j}} \\
        &\qquad +\frac{\delta_{n-j-2}\beta_{n-j-1}}{\delta_{n-j}\beta_{n-j}}
         (\theta_{n-j}\alpha_{n-j} - \beta_{n-j}\gamma_{n-j}) \Bigg]
         \cdot \left( \prod_{i=j+2}^{k-1} \alpha_{n-i+1} \right)
         \cdot \left( \prod_{i=j+2}^{k-1} \frac{\beta_{n-i}}{\beta_{n-i+1}} \right)
\displaybreak[0]\\ &=
    \frac{\delta_{n-k}}{\delta_{n-j-2}} \Bigg[
           -\frac{\theta_{n-j}}{\delta_{n-j}}
         \cdot \frac{\delta_{n-j-2}\alpha_{n-j}\beta_{n-j-1}}{\beta_{n-j}} \\
        &\qquad +\frac{\delta_{n-j-2}\beta_{n-j-1}}{\delta_{n-j}\beta_{n-j}}
         (\theta_{n-j}\alpha_{n-j} - \beta_{n-j}\gamma_{n-j}) \Bigg]
         \cdot \left( \prod_{i=j+2}^{k-1}\alpha_{n-i+1} \right) \cdot \frac{\beta_{n-k+1}}{\beta_{n-j-1}}
\displaybreak[0]\\ &=
    \frac{\delta_{n-k}}{\delta_{n-j-2}} \Bigg[
           -\frac{\delta_{n-j-2}\beta_{n-j-1}}{\delta_{n-j}\beta_{n-j}}
         \beta_{n-j}\gamma_{n-j} \Bigg]
         \cdot \left( \prod_{i=j+2}^{k-1}\alpha_{n-i+1} \right) \cdot \frac{\beta_{n-k+1}}{\beta_{n-j-1}}
\displaybreak[0]\\ &=  -\frac{\gamma_{n-j}}{\delta_{n-j}} \cdot
    \left(\prod_{i=j+2}^{k-1} \alpha_{n-i+1} \right) \cdot
    \beta_{n-k+1}\delta_{n-k},
\end{align*}
which is exactly (\ref{eq:ajkwant}). Now, for $j = k - 1$,
\begin{align*}
\widehat a_{k-1,k} = \widehat \delta_k + \frac{\widehat \beta_k}{\widehat\alpha_{k-1}}
    =  -\frac{\delta_{n-k}}{\delta_{n-k+1}}\left(\theta_{n-k+1} +
                \alpha_{n-k+2}\frac{\beta_{n-k+1}}{\beta_{n-k+2}}\right)
    + \frac{\delta_{n-k}\alpha_{n-k+2}\beta_{n-k+1}}{\beta_{n-k+2}\delta_{n-k+1}}.
\end{align*}
After simplification, we get (\ref{eq:ajkwant2}).
\end{proof}


\section{Basis transformation matrices} \label{sec:basis}
For any polynomial basis $Q$ we introduce a basis transformation matrix $S_{PQ}$
for passing from $Q$ to the monomial basis $P = \{1, x, \ldots, x^{n-1}\}$.
The $j$-th column of $S_{PQ}$ contains the coefficients of $Q_{j-1}(x)$, i.e.
\[
    Q_{j-1}(x) = \sum_{i=1}^{n} [S_{PQ}]_{ij} \cdot x^{i-1}, \quad j=1,\ldots,n.
\]
Since $\deg Q_k = k$, the matrix $S_{PQ}$ is upper triangular.
In this section we provide efficient recurrent formulas for columns
of $S_{PQ}$ for cases of quasiseparable, semiseparable and well-free polynomials.
\newline\newline
Recurrence relations for quasiseparable and semiseparable polynomials
(equations (\ref{eq:ego}) and (\ref{eq:semi}), respectively) also have a notion
of auxiliary system of polynomials $G$. These systems, generally speaking, are
not bases. However, given such a system we can define a similar matrix $S_{PG}$
whose columns contain coefficients of polynomials $\{G_k\}_{k=0}^{n-1}$.
Let $\mathbf{s}_k$ and $\mathbf{t}_k$ denote the $k$-th column of
$S_{PQ}$ and $S_{PG}$, respectively.

\begin{lemma} \label{lemma:sktk}
If a system of quasiseparable polynomials $\{Q\}$ satisfies
recurrence relations (\ref{eq:ego}),
then the following recurrence relations hold for columns of $S_{PQ}$ for all $k=1,\ldots,n-1$:
\be
\left[\begin{array}{c}
\mathbf{t}_{k+1} \\
\mathbf{s}_{k+1} 
\end{array}\right]=\left[\begin{array}{cc}
\alpha_k & \beta_k\\
\gamma_k  & (\delta_k Z_0 + \theta_k I)
\end{array}\right]
\left[\begin{array}{c}
\mathbf{t}_k\\
\mathbf{s}_k
\end{array}\right],
\label{skrecur}
\ee
where $Z_0$ is the lower shift matrix. As a consequence, all entries of $S_{PQ}$
can be computed in $\mathcal{O}(n^2)$ operations.
\end{lemma}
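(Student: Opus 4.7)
The plan is to verify (\ref{skrecur}) by interpreting the polynomial recurrence (\ref{eq:ego}) coefficient-wise in the monomial basis $P$. First I would fix the notational convention: $\mathbf{s}_k \in \mathbb{C}^n$ is defined so that $Q_{k-1}(x) = \sum_{i=1}^n [\mathbf{s}_k]_i \, x^{i-1}$, and similarly $G_{k-1}(x) = \sum_{i=1}^n [\mathbf{t}_k]_i \, x^{i-1}$. The key observation, which drives everything, is that multiplication by $x$ in polynomial space corresponds to multiplication by the lower shift matrix $Z_0$ in coefficient space: if $p(x) = \sum c_i x^{i-1}$, then $x \cdot p(x) = \sum c_i x^{i}$, whose coefficient vector in $\{1, x, \ldots, x^{n-1}\}$ is $Z_0 \mathbf{c}$ (this is well-defined since $\deg Q_{k-1} = k-1 \leq n-2$ for $k \leq n-1$, so no monomials are truncated).

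With this observation, the recurrence (\ref{eq:ego}) can be read row by row. The top row gives $G_k(x) = \alpha_k G_{k-1}(x) + \beta_k Q_{k-1}(x)$, which passes to coefficient vectors by linearity as
\[
\mathbf{t}_{k+1} = \alpha_k \mathbf{t}_k + \beta_k \mathbf{s}_k.
\]
The bottom row gives $Q_k(x) = \gamma_k G_{k-1}(x) + (\delta_k x + \theta_k) Q_{k-1}(x)$, and applying both linearity and the shift-matrix identity above yields
\[
\mathbf{s}_{k+1} = \gamma_k \mathbf{t}_k + \delta_k Z_0 \mathbf{s}_k + \theta_k \mathbf{s}_k = \gamma_k \mathbf{t}_k + (\delta_k Z_0 + \theta_k I) \mathbf{s}_k.
\]
Stacking these two identities produces exactly the block matrix recurrence (\ref{skrecur}).

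For the complexity claim, I would start from the initial columns $\mathbf{s}_1$ and $\mathbf{t}_1$ (the coefficient vectors of $Q_0 = \tau_0$ and $G_0$, which are constant and thus cost $\mathcal{O}(1)$ to write down) and then apply (\ref{skrecur}) for $k = 1, 2, \ldots, n-1$. Each application involves a scalar-times-vector operation $\delta_k Z_0 \mathbf{s}_k$ (which is just a shift) plus a constant number of scalar-vector multiplications and vector additions on vectors of length $n$, for a total of $\mathcal{O}(n)$ operations per step. Summing over the $n-1$ steps gives the $\mathcal{O}(n^2)$ bound for computing all columns of $S_{PQ}$ (and $S_{PG}$ as a by-product). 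There is no genuine obstacle here; the only thing to be careful about is justifying that the shift $Z_0 \mathbf{s}_k$ does not lose information, which follows from the degree bound $\deg Q_{k-1} = k-1 < n-1$ throughout the recurrence.
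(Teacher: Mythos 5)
Your proof is correct and follows essentially the same route as the paper's: read the two-term recurrence (\ref{eq:ego}) row by row, translate scalar multiplication and addition of polynomials into the corresponding vector operations on coefficient columns, and translate multiplication by $x$ into the lower shift $Z_0$. Your additional remarks on the degree bound (no truncation under the shift) and the per-step $\mathcal{O}(n)$ cost only make the argument more complete than the paper's version.
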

\begin{proof}
We can rewrite the matrix multiplication in (\ref{eq:ego}) row by row as
\begin{eqnarray*}
G_k(x) &=& \alpha_k G_{k-1}(x) + \beta_k Q_{k-1}(x), \\
Q_k(x) &=& \gamma_k G_{k-1}(x) + (\delta_k x + \theta_k) Q_{k-1}(x),
\end{eqnarray*}
Addition of polynomials
corresponds to addition of the vectors of their coefficients.
Multiplication of a polynomial by a scalar corresponds to
multiplication of the vector of its coefficients by the scalar.
Multiplication of a polynomial by $x$ corresponds to the lower shift
of the coefficients vector. Since $\mathbf{s}_k$ and $\mathbf{t}_k$
are coefficient vectors of $Q_{k-1}(x)$ and $G_{k-1}(x)$, respectively, system
(\ref{skrecur}) follows.
\end{proof}
\noindent The following shows how to compute the coefficients of the basis transformation matrix passing from the semiseparable basis to the monomial basis. 
\begin{lemma} \label{lemma:sktksemi}
If a system of semiseparable polynomials $\{Q\}$ satisfies
recurrence relations (\ref{eq:semi}),
then the following recurrence relations hold for columns of $S_{PQ}$
for all $k=1,\ldots,n-1$:
\be
\left[\begin{array}{c}
\mathbf{t}_{k+1} \\
\mathbf{s}_{k+1} 
\end{array}\right]=\left[\begin{array}{cc}
\alpha_k & \beta_k\\
\gamma_k  & 1
\end{array}\right]
\left[\begin{array}{c}
\mathbf{t}_k\\
(\delta_k Z_0 + \theta_k I) \mathbf{s}_k
\end{array}\right],
\label{skrecursemi}
\ee
where $Z_0$ is the lower shift matrix. As a consequence, all entries of $S_{PQ}$
can be computed in $\mathcal{O}(n^2)$ operations. 
\end{lemma}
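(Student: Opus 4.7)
The plan is to imitate the proof of Lemma \ref{lemma:sktk} almost verbatim, the only difference being the placement of the scalar factor $(\delta_k x + \theta_k)$ inside the matrix-vector product in (\ref{eq:semi}). First, I would expand the vector identity (\ref{eq:semi}) row by row to obtain the two scalar polynomial identities
\begin{align*}
G_k(x) &= \alpha_k \, G_{k-1}(x) + \beta_k (\delta_k x + \theta_k)\, Q_{k-1}(x),\\
Q_k(x) &= \gamma_k \, G_{k-1}(x) + (\delta_k x + \theta_k)\, Q_{k-1}(x).
\end{align*}

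Next, I would invoke the standard correspondence between polynomials and their coefficient vectors: addition of polynomials corresponds to coordinatewise addition, scalar multiplication corresponds to scaling the vector, and multiplication by $x$ corresponds to the action of the lower shift $Z_0$ on the coefficient vector. By definition $\mathbf{s}_k$ and $\mathbf{t}_k$ are the coefficient vectors of $Q_{k-1}(x)$ and $G_{k-1}(x)$, respectively, so the coefficient vector of $(\delta_k x + \theta_k)\, Q_{k-1}(x)$ is exactly $(\delta_k Z_0 + \theta_k I)\,\mathbf{s}_k$. Substituting these translations into the two displayed identities and stacking them gives precisely (\ref{skrecursemi}).

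For the complexity claim, I would note that the initial vectors $\mathbf{s}_1, \mathbf{t}_1$ (coefficient vectors of the constant polynomials $Q_0$ and $G_0$) are known, and each step of (\ref{skrecursemi}) requires only one application of $Z_0$, a handful of scalar-vector multiplications and a few vector additions, all of which cost $\mathcal{O}(n)$ operations. Iterating for $k=1,\ldots,n-1$ therefore produces every column of $S_{PQ}$ in $\mathcal{O}(n^2)$ total operations.

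The main, essentially cosmetic, point to watch is that the factor $(\delta_k x + \theta_k)$ multiplies $Q_{k-1}$ \emph{before} the $2\times 2$ matrix is applied, so the same vector $(\delta_k Z_0 + \theta_k I)\,\mathbf{s}_k$ appears inside both the $G$-row and the $Q$-row of the resulting system, which is exactly how (\ref{skrecursemi}) is written. No new analytic ideas beyond those in Lemma \ref{lemma:sktk} are required.
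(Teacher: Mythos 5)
Your proposal matches the paper's proof: the paper also expands (\ref{eq:semi}) row by row into the two scalar identities with the factor $(\delta_k x + \theta_k)$ attached to $Q_{k-1}$, and then invokes the same correspondence between polynomial operations and coefficient-vector operations (with $Z_0$ realizing multiplication by $x$) to read off (\ref{skrecursemi}). The complexity remark is likewise the standard $\mathcal{O}(n)$-per-column count, so no gaps here.
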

\begin{proof}
Similar to the previous case, we can write the matrix multiplication in (\ref{eq:semi}) row by row as
\begin{eqnarray*}
G_k(x) &=& \alpha_k G_{k-1}(x) + \beta_k (\delta_k x + \theta_k) Q_{k-1}(x), \\
Q_k(x) &=& \gamma_k G_{k-1}(x) + (\delta_k x + \theta_k) Q_{k-1}(x),
\end{eqnarray*}
Again, addition of polynomials
corresponds to addition of the vectors of their coefficients.
Multiplication of a polynomial by a scalar corresponds to
multiplication of the vector of its coefficients by the scalar.
Multiplication of a polynomial by $x$ corresponds to the lower shift
of the coefficients vector. Hence the result (\ref{skrecursemi}) follows.
\end{proof}
\noindent The following shows how to compute column-wise basis transformation matrix from the well-free basis to the monomial basis.   
\begin{lemma} \label{lemma:skwf}
If a system of well-free polynomials $\{ Q\}$ satisfies
recurrence relations (\ref{ewfrr}),
then the following recurrence relations hold for columns of $S_{PQ}$ for all $k=1,\ldots,n-1$:
\begin{eqnarray}
\mathbf{s}_{2} &=& ( \alpha_1 Z_0 - \delta_1 I) \mathbf{s}_{1}
                   ,
\nonumber \\
\mathbf{s}_{k+1} &=& ( \alpha_k Z_0 - \delta_k I) \mathbf{s}_{k}
                   - ( \beta_k Z_0 +  \gamma_k I) \mathbf{s}_{k-1},  \:\: k \geq 2
\label{skrecurwf}
\end{eqnarray}
where $Z_0$ is the lower shift matrix. As a consequence, all entries of $S_{P Q}$
can be computed in $\mathcal{O}(n^2)$ operations.
\end{lemma}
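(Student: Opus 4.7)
The plan is to mirror the strategy used in Lemmas \ref{lemma:sktk} and \ref{lemma:sktksemi}, translating the polynomial identity (\ref{ewfrr}) directly into an identity on coefficient vectors. Concretely, I would begin by recalling that $\mathbf{s}_k$ is by definition the coefficient vector of $Q_{k-1}(x)$ in the monomial basis $P$, so that the following three correspondences hold: addition of two polynomials corresponds to addition of their coefficient vectors; multiplication of a polynomial by a scalar corresponds to scaling its coefficient vector; and multiplication of a polynomial by $x$ corresponds to applying the lower shift matrix $Z_0$ to its coefficient vector. (Only the last of these requires any thought, and it is immediate from the definition of $Z_0$.)

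With these correspondences in hand, I would handle the base case first. For $k=1$ the relation (\ref{ewfrr}) reads $Q_1(x)=(\alpha_1 x-\delta_1)Q_0(x)$ (there is no $Q_{-1}$ term; equivalently one uses the convention $\beta_1=0$ introduced just before Lemma \ref{lemma:emnwf} and discards the vacuous $Q_{-1}$), and translating via the three correspondences above gives $\mathbf{s}_2=(\alpha_1 Z_0-\delta_1 I)\mathbf{s}_1$. For the inductive step with $k\geq 2$, I would write (\ref{ewfrr}) as
\[
Q_k(x)=\alpha_k\,x\,Q_{k-1}(x)-\delta_k Q_{k-1}(x)-\beta_k\,x\,Q_{k-2}(x)-\gamma_k Q_{k-2}(x),
\]
and then apply the three correspondences term by term to obtain
\[
\mathbf{s}_{k+1}=(\alpha_k Z_0-\delta_k I)\mathbf{s}_k-(\beta_k Z_0+\gamma_k I)\mathbf{s}_{k-1},
\]
which is exactly (\ref{skrecurwf}).

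For the complexity claim, I would observe that each step of the recurrence requires only $\mathcal{O}(n)$ arithmetic operations: the multiplications by $Z_0$ are shifts, and the resulting linear combinations of at most two previously computed vectors involve at most $\mathcal{O}(n)$ scalar multiplications and additions. Since the recurrence is carried out for $k=1,2,\ldots,n-1$ to produce all $n$ columns of $S_{PQ}$, the total cost is $\mathcal{O}(n^2)$ operations, as claimed.

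I do not expect any real obstacle: the argument is a direct transcription of the polynomial recurrence into its action on coefficient vectors, just as in the quasiseparable and semiseparable cases already treated. The only mild care needed is in initializing the recurrence correctly at $k=1$ (handling the missing $Q_{-1}$ term or, equivalently, invoking the $\beta_1=0$ convention) so that the stated base case $\mathbf{s}_2=(\alpha_1 Z_0-\delta_1 I)\mathbf{s}_1$ is obtained without spurious $\mathbf{s}_0$-type contributions.
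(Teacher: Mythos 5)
Your proof is correct, but it takes a genuinely different route from the one the paper uses for this particular lemma. You transcribe the three-term recurrence (\ref{ewfrr}) directly into an identity on coefficient vectors via the standard correspondences (addition $\leftrightarrow$ vector addition, scaling $\leftrightarrow$ scaling, multiplication by $x$ $\leftrightarrow$ applying $Z_0$), which is exactly the technique the paper itself employs for the quasiseparable and semiseparable cases in Lemmas \ref{lemma:sktk} and \ref{lemma:sktksemi}; your handling of the base case (discarding the vacuous $Q_{-1}$ term, or equivalently invoking $\beta_1=0$) is the only point requiring care, and you address it. The paper, by contrast, proves the well-free case through the confederate matrix $C(Q_n)$ of \cite{BEGKO09}: it writes $C(Q_n)=\mathit{L}+\mathit{D}+Z_0^T\mathit{D_1}\mathit{U}$, invokes the identity $S_{PQ}\,C(Q_n)=Z_0\,S_{PQ}$ from \cite{MB79}, multiplies by the bidiagonal $\mathit{U}^{-1}$ to reduce the system to a tridiagonal one, and then extracts the column recurrences by multiplying by $e_1^T, e_2^T, e_3^T,\ldots$ in turn. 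Your argument is shorter and more elementary, and it makes the well-free case uniform with the other two; the paper's argument buys an explicit connection to the confederate-matrix framework (and implicitly re-verifies the structure of $C(Q_n)$), at the cost of considerably more machinery. Both routes arrive at the same recurrence (\ref{skrecurwf}) and the same $\mathcal{O}(n)$-per-column, hence $\mathcal{O}(n^2)$ total, complexity bound.
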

\begin{proof}
Following \cite{BEGKO09}, if the system of well-free polynomials satisfies recurrence relation (\ref{ewfrr}), then the confederate matrix has the form  
\[
C(Q_n)=\begin{bmatrix}
\frac{\delta_1 }{\alpha_1 } & \frac{\frac{\delta_1}{\alpha_1}\beta_2+\gamma_2  }{\alpha_2} & \frac{\frac{\delta_1}{\alpha_1}\beta_2+\gamma_2}{\alpha_2}\left (\frac{\beta_3}{\alpha_3}  \right ) &\cdots  & \cdots & \frac{\frac{\delta_1}{\alpha_1}\beta_2+\gamma_2}{\alpha_2}\left ( \frac{\beta}{\alpha} \right )^{\times}_{2,n+1}\\ 
 \frac{1}{\alpha_1}&  \frac{\delta_2  }{\alpha_2}+\frac{\beta_2  }{\alpha_1\alpha_2}&\frac{\left ( \frac{\delta_2  }{\alpha_2}+\frac{\beta_2  }{\alpha_1\alpha_2} \right )\beta_3+\gamma_3  }{\alpha_3}  & \cdots &\cdots & \frac{\left (  \frac{\delta_2  }{\alpha_2}+\frac{\beta_2  }{\alpha_1\alpha_2}\right )\beta_3+\gamma_3}{\alpha_3} \left ( \frac{\beta}{\alpha} \right )^{\times}_{3,n+1}\\ 
 0& \frac{1}{\alpha_2} & \frac{\delta_3  }{\alpha_3}+ \frac{\beta_3  }{\alpha_2\alpha_3}&  &  & \vdots\\ 
 0&0  & \ddots & \ddots &  & \vdots\\ 
 \vdots& \vdots &  \ddots& \ddots & \ddots & \\ 
0 &0  &  \cdots& 0&\frac{1}{\alpha_{n-1}}   & \frac{\delta_n  }{\alpha_n}+\frac{\beta_n  }{\alpha_{n-1}\alpha_n}
\end{bmatrix}.
\]
Thus we can see $C(Q_n)$ as 
\be
C(Q_n)=\mathit{L}+\mathit{D}+Z_0^T\mathit{D_1}\mathit{U}
\label{eq:cwfb}
\ee 
where
\[
\begin{matrix}
\mathit{L}=\begin{bmatrix}
0 & 0 &  \cdots&  & 0\\ 
 \frac{1}{\alpha_1}& 0 &  \cdots&  & 0\\ 
0 & \frac{1}{\alpha_2} &0  &\cdots  & 0\\ 
 \vdots&  & \ddots &\ddots  &\vdots \\ 
0 &  \cdots& 0 &  \frac{1}{\alpha_{n-1}}& 0
\end{bmatrix}
&
\mathit{D}=\begin{bmatrix}
\frac{\delta_1 }{\alpha_1 } & 0 &  \cdots&  & 0\\ 
 0& \frac{\delta_2 }{\alpha_2 }+\frac{\beta_2}{\alpha_1\alpha_2} & 0 &\cdots  & 0\\ 
 &  \ddots& \ddots &  &\vdots\\ 
 \vdots&  & \ddots &\ddots  & 0\\ 
0 &  \cdots& 0 &  0&\frac{\delta_n  }{\alpha_n}+\frac{\beta_n  }{\alpha_{n-1}\alpha_n}
\end{bmatrix}
\end{matrix}
\]
\[
\begin{matrix}
\mathit{D_1}=\begin{bmatrix}
1 & 0 &  \cdots&  & 0\\ 
 0&\frac{\frac{\delta_1}{\alpha_1}\beta_2+\gamma_2}{\alpha_2}  & 0 &\cdots  & 0\\ 
0 & 0& \frac{\left (\frac{\delta_2}{\alpha_2}+\frac{\beta_2}{\alpha_1\alpha_2}  \right )\beta_3+\gamma_3}{\alpha_3} &  &\vdots\\ 
 \vdots&  & \ddots &\ddots  & 0\\ 
0 &  \cdots& 0 &  0&\frac{\left (\frac{\delta_{n-1}}{\alpha_{n-1}}+\frac{\beta_{n-1}}{\alpha_{n-2}\alpha_{n-1}}  \right )\beta_n+\gamma_n}{\alpha_n}
\end{bmatrix}
\end{matrix}
\]
\[
\mathit{U}=\begin{bmatrix}
1 &0  & 0 & \cdots &  &0 \\ 
 0&1  &\frac{\beta_3}{\alpha_3 }  & \frac{\beta_3}{\alpha_3 }\left (\frac{\beta_4}{\alpha_4 }  \right )   & \cdots & \frac{\beta_3}{\alpha_3 }\left (\frac{\beta_4}{\alpha_4 }  \right )\cdots\left (\frac{\beta_n}{\alpha_n }  \right )\\ 
 0&0  & 1 & \frac{\beta_4}{\alpha_4 }   & \cdots &\frac{\beta_4}{\alpha_4 }  \cdots\left (\frac{\beta_n}{\alpha_n }  \right ) \\ 
\vdots &  &  & \ddots &\ddots  & \vdots\\ 
 0&  & \cdots & 0 &  1& \frac{\beta_n}{\alpha_n }  \\ 
 0&  &  \cdots&  &  0& 1
\end{bmatrix}.
\]
Since $\mathit{U}^{-1}$ is bidiagonal (by \cite{BEGKO09}), the system $S_{PQ}\:C(Q_n)=Z_0\:S_{PQ}$ (by \cite{MB79})  together with (\ref{eq:cwfb}) can be seen as
\be
\begin{matrix}
S_{PQ}\:(\mathit{L}\mathit{U}^{-1}+\mathit{D}\mathit{U}^{-1}
+Z_0^T\mathit{D_1})=Z_0\:S_{PQ}\mathit{U}^{-1}
\end{matrix}
\label{eq:bandwf}
\ee
where 
\begin{center}
$\mathit{U}^{-1}=\begin{bmatrix}
1 & 0 & 0 & \cdots &  &0 \\ 
0 &1  & -\frac{\beta_3}{\alpha_3}  &  0&  \cdots& 0\\ 
 &  &  &  &  & \vdots\\ 
 \vdots&  &  &  \ddots& \ddots &0 \\ 
0 &  &  \cdots&  0& 1 &-\frac{\beta_n}{\alpha_n}  \\ 
0 &  & \cdots &  &  0&1 
\end{bmatrix}.
$
\end{center}
Hence the explicit matrix form of the system (\ref{eq:bandwf}) can be rewritten  as 
\be
\begin{matrix}
\begin{bmatrix}
s_{11} & s_{12} & s_{13} &  \cdots&  &s_{1\:n} \\ 
0 & s_{22} & s_{23} &  &  & s_{2\:n}\\ 
 & \ddots &  & \ddots &  \ddots& \vdots\\ 
\vdots &  &  & \ddots &  & \\ 
 &  &  &  &  & s_{n-1\:n}\\ 
 0&  &\cdots  &  &  0&s_{n\:n} 
\end{bmatrix}
\begin{bmatrix}
\frac{\delta_1 }{\alpha_1 } &  \frac{\frac{\delta_1}{\alpha_1}\beta_2+\gamma_2}{\alpha_2} & 0 & 0 &  \cdots&  & 0\\ 
 \frac{1}{\alpha_1 }&\frac{\delta_2}{\alpha_2}+\frac{\beta_2}{\alpha_1\alpha_2} & \frac{\gamma_3  }{\alpha_3 } & 0 &  \cdots&  & 0\\ 
0 &\frac{1}{\alpha_2 }  & \frac{\delta_3 }{\alpha_3 } &  \frac{\gamma_4  }{\alpha_4 }& 0 & \cdots & 0\\ 
0 &  & \ddots & \ddots &\ddots  &\ddots  & \vdots\\ 
 \vdots&  & \ddots &  \ddots& \ddots &  &0 \\ 
 0&  & \cdots &  0&\frac{1 }{\alpha_{n-2} }   & \frac{\delta_{n-1} }{\alpha_{n-1} } &\frac{\gamma_{n} }{\alpha_{n} } \\ 
 0&  & \cdots &  & 0 & \frac{1 }{\alpha_{n-1} } & \frac{\delta_{n} }{\alpha_{n} }
\end{bmatrix}
\\
= \begin{bmatrix}
0 & 0 & 0 &  \cdots&  &0 \\ 
1 & 0 & 0 &  &  & 0\\ 
 0& 1 &  & \ddots &  & \vdots\\ 
\vdots &  &  & \ddots &  & \\ 
 &  & \ddots & \ddots &  & 0\\ 
 0&  &\cdots  & 0 &  1&0 
\end{bmatrix}
\begin{bmatrix}
s_{11} & s_{12} & s_{13} &  \cdots&  &s_{1\:n} \\ 
0 & s_{22} & s_{23} &  &  & s_{2\:n}\\ 
 & \ddots &  & \ddots &  \ddots& \vdots\\ 
\vdots &  &  &\ddots  &  & \\ 
 &  &  &  &  & s_{n-1\:n}\\ 
 0&  &\cdots  &  &  0&s_{n\:n} 
\end{bmatrix}
\begin{bmatrix}
1 & 0 & 0 & \cdots &  &0 \\ 
0 &1  & -\frac{\beta_3}{\alpha_3}  &  0&  \cdots& 0\\ 
 &  &  &  &  & \vdots\\ 
 \vdots&  &  &  \ddots& \ddots &0 \\ 
0 &  &  \cdots&  0& 1 &-\frac{\beta_n}{\alpha_n}  \\ 
0 &  & \cdots &  &  0&1 
\end{bmatrix}.
\end{matrix}
\label{eq:mbandwf}
\ee 
Notice that $(\mathit{L}\mathit{U}^{-1}+\mathit{D}\mathit{U}^{-1}
+Z_0^T\mathit{D_1})$ in (\ref{eq:bandwf}) has reduced to a tridiagonal matrix. 
\\
Now by multiplying the system (\ref{eq:mbandwf}) by $e_1^T$ from right, we have
\begin{center}
$
\begin{bmatrix}
\frac{\delta_1}{\alpha_1}s_{11}+\frac{1}{\alpha_1}s_{12}\\ 
\frac{1}{\alpha_1}s_{22}\\ 
0\\ 
\vdots
\\
0
\end{bmatrix}=\begin{bmatrix}
0\\ 
s_{11}\\ 
\vdots\\ 
\\ 
0
\end{bmatrix}
$
\end{center}
and rearranging
\[
{\bf s_2} = \alpha_1 Z_0 {\bf s_1} - \delta_1 I{\bf s_1}
\]
gives the second column. Next by multiplying the system (\ref{eq:mbandwf}) by $e_2^T$ from right, we have
\begin{center}
$
\begin{bmatrix}
\left( \frac{\frac{\delta_1}{\alpha_1}\beta_2+\gamma_2}{\alpha_2}\right)s_{11}+\left(\frac{\delta_2}{\alpha_2}+\frac{\beta_2}{\alpha_1\alpha_2}\right)s_{12}+\frac{1}{\alpha_2}s_{13}\\ 
\left(\frac{\delta_2}{\alpha_2}+\frac{\beta_2}{\alpha_1\alpha_2}\right)s_{22}+\frac{1}{\alpha_2}s_{23}\\ 
\frac{1}{\alpha_2}s_{33}\\
0\\ 
\vdots
\\
0
\end{bmatrix}=\begin{bmatrix}
0\\ 
s_{12}\\ 
s_{22}\\
0\\ 
\vdots\\ 
0
\end{bmatrix}
$
\end{center}
and rearranging
\[
{\bf s_3}=\alpha_2 \:Z_0 {\bf s_2}-\delta_2 I \:{\bf s_2}- \beta_2 Z_0 {\bf s_1}- \gamma_2 I \:{\bf s_1}
\] 
gives the result for $k=2$. Next multiplying the system (\ref{eq:mbandwf}) by $e_3^T$ from right, we have
\begin{center}
$
\begin{bmatrix}
\left ( \frac{\gamma_3 }{\alpha_3 } \right )s_{12}+\left ( \frac{\delta_3}{\alpha_3 } \right )s_{13}+\frac{1}{\alpha_3}s_{14}\\ 
\left ( \frac{\gamma_3 }{\alpha_3 } \right )s_{22}+\left ( \frac{\delta_3}{\alpha_3 } \right )s_{23}+\frac{1}{\alpha_3}s_{24}\\ 
\left ( \frac{\delta_3}{\alpha_3 } \right )s_{33}+\frac{1}{\alpha_3}s_{34}\\
\frac{1}{\alpha_3}s_{44}\\ 
0\\
\vdots
\\
0
\end{bmatrix}=\begin{bmatrix}
0\\ 
-\frac{\beta_3}{\alpha_3}s_{12}+s_{13}\\ 
-\frac{\beta_3}{\alpha_3}s_{22}+s_{23}\\
s_{33}\\
0\\ 
\vdots\\ 
0
\end{bmatrix}
$
\end{center}
and rearranging
\[
{\bf s_4}=\alpha_3 \:Z_0{\bf s_3}-\delta_3 I {\bf s_3}-\beta_3\:Z_0{\bf s_2}-
\gamma_3 I {\bf s_2}
\] 
gives the result for $k=3$. Continuing in this fashion, we can recover all columns in the basis transformation matrix and hence the result (\ref{skrecurwf}) follows.
\end{proof}


\section{Computation of sums in the inversion formula}
\label{sec:sums}
In this section we analyze the cost of computing the sum $V_Q(x)\cdot(\sum_{k=1}^n d_k W_Q^{k-1})$ in the inversion formula (\ref{eq:invformula}) of polynomial Vandermonde-like matrices. We continue the discussion by covering the cost of computing $V_Q(x)\cdot(\sum_{k=1}^n d_k W_Q^{k-1})$ for quasiseparable, semiseparable and well-free polynomials $Q$. Before computing the cost of the summation corresponding to the quasiseparable, semiseparable and well-free polynomials let us state a supporting result from \cite{KO97} which enables us to continue the discussion. 
\begin{lemma}\label{lemma:vqvf}
Let $\{Q\}$ be a system of polynomials,
$V_Q(x)$ be the polynomial Vandermonde matrix for $Q$ with nodes $x_1, x_2, \ldots, x_n$,
let $S_{PQ}$ be the matrix corresponding to passing from $Q$ to the monomial
basis $P$ and let numbers
$d_1, d_2, \ldots, d_n$ be arbitrary. Then the following displacement
equation holds:
\begin{equation}
V_Q(x)\cdot\left(\sum_{k=1}^n d_k W_Q^{k-1}\right)
=
\left(\sum_{k=1}^n d_k D_{\frac{1}{x}}^{k-1}\right)
 \cdot V_Q(x) - V_F(1/x) \cdot S_{PQ},
\end{equation}
where
\begin{equation}
V_F(1/x) = \left[\begin{array}{cccc}
F_0(\frac{1}{x_1}) & 
F_1(\frac{1}{x_1}) & 
\cdots &
F_{n-1}(\frac{1}{x_1})
\\
F_0(\frac{1}{x_2}) & 
F_1(\frac{1}{x_2}) & 
\cdots &
F_{n-1}(\frac{1}{x_2})
\\
\vdots &
\vdots &
       &
\vdots
\\
F_0(\frac{1}{x_n}) & 
F_1(\frac{1}{x_n}) & 
\cdots &
F_{n-1}(\frac{1}{x_n})
\end{array}\right],
\label{vfoneoverx}
\end{equation}
with
\begin{eqnarray}
F_{n-1}(1/x) & = & 0, \nonumber \\
F_k(1/x) & = & \frac{1}{x} \cdot (F_{k+1}(1/x) + d_{k+2}), \quad (k = 0,1,\ldots,n-2). \label{fkrecur}
\end{eqnarray}
\end{lemma}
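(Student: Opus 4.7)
Let $p(z) := \sum_{k=1}^n d_k z^{k-1}$, so that the claimed identity is equivalent to $p(D_{1/x})\, V_Q(x) - V_Q(x)\, p(W_Q) = V_F(1/x)\, S_{PQ}$. The plan is to derive this by a telescoping expansion of the basic rank-one displacement for $V_Q$, after which the proof reduces to two recognition steps: identifying a certain scalar sum as $F_j(1/x_i)$, and identifying the row $\tau_0\, e_1^T M_Q^{-1} W_Q^j$ as a row of $S_{PQ}$.

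First, I would multiply (\ref{disp1v}) on the left by $D_{1/x}$ and on the right by $M_Q^{-1}$ to obtain the rank-one displacement $D_{1/x}\, V_Q - V_Q\, W_Q = (D_{1/x}\, \mathbf{1})\,(\tau_0\, e_1^T M_Q^{-1})$. The standard telescoping identity $A^{k-1} R - R B^{k-1} = \sum_{j=0}^{k-2} A^{k-2-j}(AR - RB) B^j$ applied with $A = D_{1/x}$, $B = W_Q$, $R = V_Q$ then gives $D_{1/x}^{k-1} V_Q - V_Q W_Q^{k-1} = \sum_{j=0}^{k-2} D_{1/x}^{k-1-j} \mathbf{1} \cdot \tau_0\, e_1^T M_Q^{-1} W_Q^j$. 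Weighting by $d_k$, summing over $k$, and swapping the order of summation produces $\sum_{j=0}^{n-2} \mathbf{v}_j \cdot \tau_0\, e_1^T M_Q^{-1} W_Q^j$, where the column vector $\mathbf{v}_j$ has $i$-th entry $\sum_{k=j+2}^n d_k\, x_i^{-(k-1-j)}$. A short induction using the recurrence (\ref{fkrecur}) identifies this entry with $F_j(1/x_i)$, so $\mathbf{v}_j$ is precisely the $(j+1)$-th column of $V_F(1/x)$.

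The main obstacle is recognizing $\tau_0\, e_1^T M_Q^{-1} W_Q^k$ as the $(k+1)$-th row of $S_{PQ}$. I would approach this by encoding (\ref{generalrecur}) as the row identity $q(x)^T(M_Q - x N_Q) = \tau_0\, e_1^T$, where $q(x)^T := [\,Q_0(x),\ldots,Q_{n-1}(x)\,]$. Setting $x = 0$ gives $q(0)^T = \tau_0\, e_1^T M_Q^{-1}$, which is the first row of $S_{PQ}$ since $[S_{PQ}]_{1,j} = Q_{j-1}(0)$. Differentiating $k$ times and applying the Leibniz rule (noting $(M_Q - xN_Q)'' = 0$) yields $q^{(k)}(x)^T(M_Q - xN_Q) = k\, q^{(k-1)}(x)^T N_Q$, which at $x = 0$ becomes $q^{(k)}(0)^T = k\, q^{(k-1)}(0)^T W_Q$. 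Induction on $k$ then gives $q^{(k)}(0)^T/k! = \tau_0\, e_1^T M_Q^{-1} W_Q^k$; since the $j$-th entry on the left is the coefficient of $x^k$ in $Q_{j-1}(x)$, this coincides exactly with the $(k+1)$-th row of $S_{PQ}$.

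Combining the two identifications, $\sum_{k=1}^n d_k(D_{1/x}^{k-1} V_Q - V_Q W_Q^{k-1})$ becomes $\sum_{j=0}^{n-2} [V_F(1/x)]_{:,\,j+1}\cdot e_{j+1}^T S_{PQ}$. Because $F_{n-1} \equiv 0$ makes the $n$-th column of $V_F(1/x)$ vanish, extending the outer sum to $j = n-1$ contributes nothing, so the right side is exactly $V_F(1/x)\, S_{PQ}$. Rearranging yields the lemma; once the row-identification step is in hand, the remaining bookkeeping is routine.
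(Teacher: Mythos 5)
Your proof is correct: I checked the telescoping identity, the closed form $F_j(1/x)=\sum_{k=j+2}^{n}d_k x^{-(k-1-j)}$ against the recurrence (\ref{fkrecur}), and the differentiation argument showing $\tau_0\, e_1^T M_Q^{-1} W_Q^k$ equals the $(k+1)$-th row of $S_{PQ}$, and all steps go through (the extension of the outer sum to $j=n-1$ using $F_{n-1}\equiv 0$ is also handled correctly). The paper itself gives no argument here --- its proof is the single line ``See Lemma 7.2 in \cite{KO97}'' --- so there is no in-paper derivation to compare against; your telescoping of the rank-one displacement $D_{\frac{1}{x}}V_Q - V_Q W_Q = (D_{\frac{1}{x}}\mathbf{1})(\tau_0 e_1^T M_Q^{-1})$ combined with the row identity $q(x)^T(M_Q - xN_Q)=\tau_0 e_1^T$ is the standard route for identities of this type and supplies a valid self-contained proof of the cited result.
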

\begin{proof}
See Lemma 7.2 in\cite{KO97}.
\end{proof}
\noindent Lemma \ref{lemma:vqvf} shows that the problem of computing the elements
of $V_Q(x)\cdot(\sum_{k=1}^n d_k W_Q^{k-1})$ can be reduced to computing
the matrices $(\sum_{k=1}^n d_k D_{\frac{1}{x}}^{k-1}) \cdot V_Q(x)$
and $V_F(1/x) \cdot S_{PQ}$.
To obtain the entries of the former matrix, we note that for quasiseparable,
semiseparable and well-free polynomials the entries
of $V_Q(x)$ can be computed in $\mathcal{O}(n^2)$. The matrix $\sum_{k=1}^n d_k D_{\frac{1}{x}}^{k-1}$
is diagonal and can be computed in $\mathcal{O}(n^2)$. Finally, multiplication
of a dense matrix $V_Q(x)$ by a diagonal matrix can be done in $\mathcal{O}(n^2)$ operations.
Therefore, the entries of the matrix
$(\sum_{k=1}^n d_k D_{\frac{1}{x}}^{k-1}) \cdot V_Q(x)$
can be computed in $\mathcal{O}(n^2)$ operations.
\\\\
To get an $\mathcal{O}(n^2)$ algorithm for computation of 
$V_Q(x)\cdot(\sum_{k=1}^n d_k W_Q^{k-1})$, it remains
to show that the entries of $V_F(1/x)\cdot S_{PQ}$
can be computed in $\mathcal{O}(n^2)$ operations as well.
To do so, we need to exploit the special structure of matrices $V_F(1/x)$
and $S_{PQ}$. 
\begin{lemma} \label{lemma:vfdisp}
The matrix $V_F(1/x)$ defined by (\ref{vfoneoverx})
satisfies the following displacement equation:
\begin{equation}\label{eq:vfdisp}
V_F(1/x) \cdot Z_0 =
D_x \cdot V_F(1/x) - 
\left[\begin{array}{c}
1 \\
1 \\
\vdots \\
1 \\
 \end{array}\right]
\cdot
\left[\begin{array}{ccccc}
d_2 &
d_3 &
\cdots &
d_n &
0
 \end{array}\right],
\end{equation}
where $Z_0$ is the lower shift matrix.
\end{lemma}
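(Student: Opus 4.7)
The plan is to verify the displacement equation entry by entry, using nothing more than the defining recurrence for $F_k(1/x)$. First I would pin down what right-multiplication by $Z_0$ does. Since $Z_0$ is the lower shift matrix, $Z_0 e_j = e_{j+1}$ for $j < n$ and $Z_0 e_n = 0$, so right-multiplication shifts columns one position to the left and zeros out the last column. Applied to $V_F(1/x)$, which has $(i,j)$ entry $F_{j-1}(1/x_i)$, this yields
\[
[V_F(1/x)\cdot Z_0]_{ij} = \begin{cases} F_j(1/x_i), & 1 \leq j \leq n-1, \\ 0, & j = n.\end{cases}
\]

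Next I would compute the $(i,j)$ entry of the right-hand side. The diagonal factor $D_x$ multiplies row $i$ by $x_i$, giving $[D_x\cdot V_F(1/x)]_{ij} = x_i\,F_{j-1}(1/x_i)$. The rank-one correction $\mathbf{1}\cdot[d_2,d_3,\ldots,d_n,0]$ contributes $d_{j+1}$ in column $j$ for $j \leq n-1$ and $0$ in column $n$. So proving the lemma reduces to the two scalar identities
\[
F_j(1/x_i) = x_i\,F_{j-1}(1/x_i) - d_{j+1}, \quad 1 \leq j \leq n-1,
\]
and $0 = x_i\,F_{n-1}(1/x_i) - 0$ in the final column.

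The first identity is obtained immediately by rearranging the defining recurrence (\ref{fkrecur}): multiplying $F_{k}(1/x) = \tfrac{1}{x}(F_{k+1}(1/x) + d_{k+2})$ by $x$ and reindexing with $k = j-1$ gives exactly $F_j(1/x) = xF_{j-1}(1/x) - d_{j+1}$ for the required range $j = 1,\ldots,n-1$. The boundary column $j = n$ is vacuous because the initial condition $F_{n-1}(1/x) \equiv 0$ forces both sides to vanish.

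There is no real obstacle here; the only thing to be careful about is the column-shifting convention for $Z_0$ and the off-by-one indexing between $F_k$ and the columns of $V_F(1/x)$. Once those are aligned with the recurrence, the two identities drop out immediately.
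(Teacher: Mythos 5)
Your proof is correct and is exactly the argument the paper intends: the paper's proof consists of the single line ``This can easily be checked by matrix multiplication,'' and you have simply carried out that entrywise check, correctly handling the column shift induced by $Z_0$ and the reindexing of the recurrence (\ref{fkrecur}).
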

\begin{proof}
This can easily be checked by matrix multiplication.
\end{proof}
\noindent Now we are ready to show that in cases of quasiseparable,
semiseparable and well-free polynomials, the entries
of $V_Q(x)\cdot(\sum_{k=1}^n d_k W_Q^{k-1})$ can be computed in
$\mathcal{O}(n^2)$ operations. To initiate let's start the computation for quaiseparable polynomials $Q$ satisfying the recurrence relations (\ref{eq:ego}).

\begin{lemma}\label{lemma:qs72}
Let a system of quasiseparable polynomials $\{Q\}$ satisfy
the recurrence relations (\ref{eq:ego}),
the matrices $V_Q(x)$ and $W_Q$ be defined as in (\ref{eq:vq}) and (\ref{eq:wq}),
and let numbers
$d_1, d_2, \ldots, d_n$ be arbitrary. Then the complexity of computing
the entries of the matrix $V_Q(x)\cdot(\sum_{k=1}^n d_k W_Q^{k-1})$
is no more than $\mathcal{O}(n^2)$ operations.
\begin{proof}
By Lemma \ref{lemma:vqvf}, we only need to show that
the entries of $V_F(1/x)\cdot S_{PQ}$ can be computed in $\mathcal{O}(n^2)$
operations. By Lemma \ref{lemma:sktk}, columns of $S_{PQ}$ satisfy
recurrence relations (\ref{skrecur}).
After multiplying each equation in the system
(\ref{skrecur}) by $V_F(1/x)$, we get recurrence relations
for columns $V_F(1/x)\mathbf{s}_{k+1}$ of product $V_F(1/x)\cdot S_{PQ}$:
\begin{eqnarray}
V_F(1/x)\mathbf{t}_{k+1} &=& \alpha_k V_F(1/x) \mathbf{t}_{k}
 + \beta_k V_F(1/x) \mathbf{s}_{k},
\label{vtkrecur}
\\
V_F(1/x) \mathbf{s}_{k+1} &=& \gamma_k V_F(1/x) \mathbf{t}_{k}
  + \delta_k V_F(1/x) Z_0 \mathbf{s}_{k} + \theta_k V_F(1/x) \mathbf{s}_{k}.
\label{vskrecur}
\end{eqnarray}
Because of the term $\delta_kV_F(1/x)Z_0\mathbf{s}_k$,
these recurrence relations do not provide
an efficient way to compute columns of $V_F(1/x)\cdot S_{PQ}$.
Using Lemma \ref{lemma:vfdisp}, we can substitute (\ref{eq:vfdisp})
into equation (\ref{vskrecur}) to get
\begin{equation}
\label{vskrecurnew}
\begin{split}
V_F(1/x) \mathbf{s}_{k+1} &= \gamma_k V_F(1/x) \mathbf{t}_{k}
  + \delta_k D_x \cdot (V_F(1/x) \mathbf{s}_{k})
  \\ &- \delta_k
\left[\begin{array}{c}
1 \\
1 \\
\vdots \\
1 \\
 \end{array}\right]
\cdot
\left[\begin{array}{ccccc}
d_2 &
d_3 &
\cdots &
d_n &
0
 \end{array}\right] \mathbf{s}_k
 \\ &+ \theta_k V_F(1/x) \mathbf{s}_{k}.
\end{split}
\end{equation}
Given $V_F(1/x)\mathbf{s}_k$ and $V_F(1/x)\mathbf{t}_k$, formulas
(\ref{vtkrecur}) and (\ref{vskrecurnew})
allow us to compute $V_F(1/x) \mathbf{s}_{k+1}$
and $V_F(1/x) \mathbf{t}_{k+1}$ in $\mathcal{O}(n)$ operations.
Hence all $n$ columns of matrix $V_F(1/x)\cdot S_{PQ}$
can be computed in $\mathcal{O}(n^2)$.
\end{proof}
\end{lemma}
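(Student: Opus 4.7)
The plan is to reduce the computation of $V_Q(x) \cdot (\sum_{k=1}^n d_k W_Q^{k-1})$ via Lemma \ref{lemma:vqvf}, which splits this product into $(\sum_{k=1}^n d_k D_{1/x}^{k-1}) \cdot V_Q(x)$ and $V_F(1/x) \cdot S_{PQ}$. The first term is easy: the Vandermonde entries $Q_j(x_i)$ can be generated column by column in $\mathcal{O}(n^2)$ via the EGO recurrence (\ref{eq:ego}), the scalars $\sum_k d_k x_i^{-(k-1)}$ can be computed by Horner in $\mathcal{O}(n)$ per row, and multiplying a dense matrix by a diagonal is $\mathcal{O}(n^2)$. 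So the whole task is to show that $V_F(1/x) \cdot S_{PQ}$ can also be computed in $\mathcal{O}(n^2)$ operations.

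For that, I would try to propagate the product column by column using the quasiseparable recurrence for $S_{PQ}$. Lemma \ref{lemma:sktk} gives
\[
\mathbf{t}_{k+1} = \alpha_k \mathbf{t}_k + \beta_k \mathbf{s}_k, \qquad
\mathbf{s}_{k+1} = \gamma_k \mathbf{t}_k + \delta_k Z_0 \mathbf{s}_k + \theta_k \mathbf{s}_k,
\]
so if I multiply both equations by $V_F(1/x)$ from the left, the quantities $V_F(1/x)\mathbf{t}_k$ and $V_F(1/x)\mathbf{s}_k$ satisfy a similar recurrence. Every term is a cheap linear combination of previously computed vectors \emph{except} the term $\delta_k V_F(1/x) Z_0 \mathbf{s}_k$, and this is the main obstacle: a naive evaluation of $V_F(1/x) Z_0$ as a dense matrix product would cost $\mathcal{O}(n^2)$ per step, spoiling the overall bound.

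The way around this is to replace $V_F(1/x) Z_0$ using the displacement identity of Lemma \ref{lemma:vfdisp}, namely $V_F(1/x) Z_0 = D_x V_F(1/x) - \mathbf{1}\,[d_2,\ldots,d_n,0]$. Substituting into the recurrence for $V_F(1/x)\mathbf{s}_{k+1}$ expresses $V_F(1/x) Z_0 \mathbf{s}_k$ as $D_x \cdot (V_F(1/x)\mathbf{s}_k)$ minus a rank-one correction $\mathbf{1} \cdot ([d_2,\ldots,d_n,0]\mathbf{s}_k)$. The first piece is a diagonal times an already-available vector ($\mathcal{O}(n)$), and the second piece is a scalar times the all-ones vector, whose scalar is an inner product ($\mathcal{O}(n)$). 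Thus the updated recurrence computes each pair $(V_F(1/x)\mathbf{t}_{k+1}, V_F(1/x)\mathbf{s}_{k+1})$ from the previous pair in $\mathcal{O}(n)$ operations.

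Running this for $k=1,\ldots,n-1$ yields all $n$ columns of $V_F(1/x)\cdot S_{PQ}$ in $\mathcal{O}(n^2)$ time, and combining with the first part completes the proof. The key insight to highlight is that the displacement structure of $V_F(1/x)$ converts the otherwise costly ``shift before multiply'' into a diagonal scaling plus a rank-one update, which is exactly the trick that makes the recurrence linear-time per step.
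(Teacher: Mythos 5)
Your proposal is correct and follows essentially the same route as the paper: reduce via Lemma \ref{lemma:vqvf} to computing $V_F(1/x)\cdot S_{PQ}$, propagate the columns using the recurrence of Lemma \ref{lemma:sktk}, and eliminate the troublesome $V_F(1/x)Z_0\mathbf{s}_k$ term with the displacement identity of Lemma \ref{lemma:vfdisp}, yielding an $\mathcal{O}(n)$ cost per column. The only difference is cosmetic: you spell out the $\mathcal{O}(n^2)$ cost of the first factor $(\sum_k d_k D_{1/x}^{k-1})\cdot V_Q(x)$ inside the proof, whereas the paper disposes of that part in the discussion preceding the lemma.
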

\noindent The next result shows the cost of the computation $V_Q(x)\cdot(\sum_{k=1}^n d_k W_Q^{k-1})$ for semiseparable polynomials $Q$ satisfying recurrence relation (\ref{eq:semi}).
\begin{lemma}\label{lemma:ss72}
Let a system of semiseparable polynomials $\{Q\}$ satisfy
the recurrence relations (\ref{eq:semi}),
the matrices $V_Q(x)$ and $W_Q$ be defined as in (\ref{eq:vq}) and (\ref{eq:wq}),
and let numbers
$d_1, d_2, \ldots, d_n$ be arbitrary. Then the complexity of computing
the entries of the matrix $V_Q(x)\cdot(\sum_{k=1}^n d_k W_Q^{k-1})$
is no more than $\mathcal{O}(n^2)$ operations.
\begin{proof}
By Lemma \ref{lemma:vqvf}, we only need to show that
the entries of $V_F(1/x)\cdot S_{PQ}$ can be computed in $\mathcal{O}(n^2)$
operations. By Lemma \ref{lemma:sktksemi}, columns of $S_{PQ}$ satisfy
recurrence relations (\ref{skrecursemi}).
After multiplying each equation in the system (\ref{skrecursemi}) by $V_F(1/x)$, we get recurrence relations
for columns $V_F(1/x)\mathbf{s}_{k+1}$ of product $V_F(1/x)\cdot S_{PQ}$:
\begin{eqnarray}
V_F(1/x)\mathbf{t}_{k+1} &=& \alpha_k V_F(1/x) \mathbf{t}_k
                  + \beta_k \delta_k V_F(1/x) Z_0 \mathbf{s}_k
                  + \beta_k \theta_k V_F(1/x) \mathbf{s}_k,
\label{vtkrecursemi}
\\
V_F(1/x)\mathbf{s}_{k+1} &=& \gamma_k V_F(1/x) \mathbf{t}_k
        + \delta_k V_F(1/x) Z_0 \mathbf{s_k}  + \theta_k V_F(1/x) \mathbf{s}_k.
\label{vskrecursemi}
\end{eqnarray}
By Lemma \ref{lemma:vfdisp}, we can substitute (\ref{eq:vfdisp})
into equations (\ref{vtkrecursemi}) and (\ref{vskrecursemi}) to get
\begin{equation}
\label{vtkrecurseminew}
\begin{split}
V_F(1/x) \mathbf{t}_{k+1} &= \alpha_k V_F(1/x) \mathbf{t}_{k}
  + \beta_k \delta_k D_x \cdot (V_F(1/x) \mathbf{s}_{k})
  \\ &- \beta_k \delta_k
\left[\begin{array}{c}
1 \\
\vdots \\
1 \\
 \end{array}\right]
\cdot
\left[\begin{array}{ccccc}
d_2 &
d_3 &
\cdots &
d_n &
0
 \end{array}\right] \mathbf{s}_k
 \\ &+ \beta_k \theta_k V_F(1/x) \mathbf{s}_{k}.
\end{split}
\end{equation}

\begin{equation}
\label{vskrecurseminew}
\begin{split}
V_F(1/x) \mathbf{s}_{k+1} &= \gamma_k V_F(1/x) \mathbf{t}_{k}
  + \delta_k D_x \cdot (V_F(1/x) \mathbf{s}_{k})
  \\ &- \delta_k
\left[\begin{array}{c}
1 \\
\vdots \\
1 \\
 \end{array}\right]
\cdot
\left[\begin{array}{ccccc}
d_2 &
d_3 &
\cdots &
d_n &
0
 \end{array}\right] \mathbf{s}_k
 \\ &+ \theta_k V_F(1/x) \mathbf{s}_{k}.
\end{split}
\end{equation}
Given $V_F(1/x)\mathbf{s}_k$ and $V_F(1/x)\mathbf{t}_k$, formulas
(\ref{vtkrecurseminew}) and (\ref{vskrecurseminew})
allow one to compute $V_F(1/x) \mathbf{s}_{k+1}$
and $V_F(1/x) \mathbf{t}_{k+1}$ in $\mathcal{O}(n)$ operations.
Hence all $n$ columns of matrix $V_F(1/x)\cdot S_{PQ}$
can be computed in $\mathcal{O}(n^2)$.
\end{proof}
\end{lemma}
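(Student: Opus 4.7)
The plan is to mirror the proof strategy used for the quasiseparable case (Lemma \ref{lemma:qs72}), adapting it to the more complex semiseparable recurrence (\ref{eq:semi}). First I would apply Lemma \ref{lemma:vqvf} to decompose $V_Q(x)\cdot(\sum_{k=1}^n d_k W_Q^{k-1})$ as the sum (with appropriate signs) of $(\sum_{k=1}^n d_k D_{1/x}^{k-1})\cdot V_Q(x)$ and $V_F(1/x)\cdot S_{PQ}$. The first of these is a diagonal matrix multiplied by a dense matrix; since the entries of $V_Q(x)$ for semiseparable $\{Q\}$ can be evaluated in $\mathcal{O}(n^2)$ time by running the recurrence (\ref{eq:semi}) at each node, and the diagonal $\sum_k d_k D_{1/x}^{k-1}$ costs only $\mathcal{O}(n^2)$, this term is handled. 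So the whole argument reduces to the complexity of computing $V_F(1/x)\cdot S_{PQ}$.

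For this step I would invoke Lemma \ref{lemma:sktksemi}, which expresses the columns $\mathbf{s}_k,\mathbf{t}_k$ of $S_{PQ}$ and $S_{PG}$ through the recurrence (\ref{skrecursemi}). Left-multiplying each equation by $V_F(1/x)$ produces a recurrence for the columns of $V_F(1/x)\cdot S_{PQ}$ and $V_F(1/x)\cdot S_{PG}$. The obvious obstacle is that, unlike in the monomial basis, this recurrence introduces terms of the form $V_F(1/x)\, Z_0 \,\mathbf{s}_k$, and a naive evaluation would require an $\mathcal{O}(n^2)$ shift of vectors at every step, giving $\mathcal{O}(n^3)$ overall. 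This is the key difficulty.

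To overcome it, I would use Lemma \ref{lemma:vfdisp}, the displacement identity $V_F(1/x)\cdot Z_0 = D_x\cdot V_F(1/x) - \mathbf{1}\cdot[d_2,\ldots,d_n,0]$, to rewrite every occurrence of $V_F(1/x) Z_0 \mathbf{s}_k$ as a diagonal scaling of the already-computed vector $V_F(1/x)\mathbf{s}_k$ minus a rank-one correction whose computation reduces to the single scalar $[d_2,\ldots,d_n,0]\mathbf{s}_k$. Here the semiseparable case differs from the quasiseparable one because the shift term appears in \emph{both} equations of (\ref{skrecursemi}) (owing to the factor $(\delta_k x + \theta_k)$ acting on $Q_{k-1}$ in the upper row as well as the lower row), so the substitution must be performed twice, yielding recurrences analogous to (\ref{vskrecurnew}) but for both $V_F(1/x)\mathbf{t}_{k+1}$ and $V_F(1/x)\mathbf{s}_{k+1}$.

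Finally I would observe that, given $V_F(1/x)\mathbf{s}_k$ and $V_F(1/x)\mathbf{t}_k$, each of the new recurrences requires only a constant number of length-$n$ vector scalings, additions, one diagonal multiplication by $D_x$, and one inner product, for a per-step cost of $\mathcal{O}(n)$. With $n$ columns to generate, the entries of $V_F(1/x)\cdot S_{PQ}$ are obtained in $\mathcal{O}(n^2)$ operations, completing the proof. The main technical obstacle, as noted, is the presence of $Z_0$ in the recurrence; the whole argument hinges on Lemma \ref{lemma:vfdisp} converting that shift into a diagonal plus rank-one correction.
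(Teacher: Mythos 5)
Your proposal follows the paper's proof exactly: reduce via Lemma \ref{lemma:vqvf} to the product $V_F(1/x)\cdot S_{PQ}$, push $V_F(1/x)$ through the column recurrence of Lemma \ref{lemma:sktksemi}, and use the displacement identity of Lemma \ref{lemma:vfdisp} to replace each $V_F(1/x)Z_0\mathbf{s}_k$ by a diagonal scaling plus a rank-one correction, applied in both equations since the shift appears in both rows of (\ref{skrecursemi}). This is precisely the argument in the paper, so no further comparison is needed.
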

\noindent The following result shows the cost of the computation $V_Q(x)\cdot(\sum_{k=1}^n d_k W_Q^{k-1})$ for well-free polynomials $Q$ satisfying recurrence relation (\ref{ewfrr}).
\begin{lemma}\label{lemma:wf72}
Let a system of well-free polynomials $Q$ satisfy
the recurrence relations (\ref{ewfrr}),
the matrices $V_Q(x)$ and $W_Q$ be defined as in (\ref{eq:vq}) and (\ref{eq:wq}),
and let numbers
$d_1, d_2, \ldots, d_n$ be arbitrary. Then the complexity of computing
the entries of the matrix $V_Q(x)\cdot(\sum_{k=1}^n d_k W_Q^{k-1})$
is no more than $\mathcal{O}(n^2)$ operations.
\begin{proof}
By Lemma \ref{lemma:vqvf}, we only need to show that
the entries of $V_F(1/x)\cdot S_{PQ}$ can be computed in $\mathcal{O}(n^2)$
operations. By Lemma \ref{lemma:skwf}, columns of $S_{PQ}$ satisfy
recurrence relation (\ref{skrecurwf}).
After multiplying (\ref{skrecurwf}) by $V_F(1/x)$, we get recurrence relations
for columns $V_F(1/x)\mathbf{s}_{k+1}$ of product $V_F(1/x)\cdot S_{PQ}$:

\begin{eqnarray}
V_F(1/x) \mathbf{s}_{2} &=& \alpha_1 V_F(1/x) Z_0 \mathbf{s}_{1}
              - \delta_1 V_F(1/x) \mathbf{s}_{1},  \nonumber \\
V_F(1/x) \mathbf{s}_{k+1} &=& \alpha_k V_F(1/x) Z_0 \mathbf{s}_{k}
                        - \delta_k V_F(1/x) \mathbf{s}_{k} \nonumber \\
                   & &\qquad - \beta_k V_F(1/x) Z_0 \mathbf{s}_{k-1}
                   - \gamma_k V_F(1/x) \mathbf{s}_{k-1},
                    \quad k\geq 2.
\label{vskrecurwf}
\end{eqnarray}

By Lemma \ref{lemma:vfdisp}, we can substitute (\ref{eq:vfdisp})
into equation (\ref{vskrecurwf}) to get
\begin{equation}
\label{vskrecurwfnew}
\begin{split}
V_F(1/x) \mathbf{s}_{2} &=
    \alpha_1 D_x (V_F(1/x) \mathbf{s}_{1}) \\
  &- \alpha_1 
\left[\begin{array}{c} 1 \\ \vdots \\ 1 \\ \end{array}\right]
\cdot
\left[\begin{array}{ccccc} d_2 & d_3 & \cdots & d_n & 0 \end{array}\right]
\cdot
\mathbf{s}_1 \\
  &- \delta_1 V_F(1/x) \mathbf{s}_{1},\\
V_F(1/x) \mathbf{s}_{k+1} &= \alpha_k D_x (V_F(1/x) \mathbf{s}_k) 
    -\delta_k V_F(1/x) \mathbf{s}_k\\
    &- \alpha_k 
\left[\begin{array}{c} 1 \\ \vdots \\ 1 \\ \end{array}\right]
\cdot
\left[\begin{array}{ccccc} d_2 & d_3 & \cdots & d_n & 0 \end{array}\right]
\cdot
\mathbf{s}_k \\
    &- \beta_k D_x ( V_F(1/x) \mathbf{s}_{k-1} ) - \gamma_k V_F(1/x) \mathbf{s}_{k-1} \\
    &+ \beta_k 
\left[\begin{array}{c} 1 \\ \vdots \\ 1 \\ \end{array}\right]
\cdot
\left[\begin{array}{ccccc} d_2 & d_3 & \cdots & d_n & 0 \end{array}\right]
\cdot
\mathbf{s}_{k-1}, \quad k \geq 2.
\end{split}
\end{equation}
Given $V_F(1/x)\mathbf{s}_{k-1}$ and $V_F(1/x)\mathbf{s}_k$, formula
(\ref{vskrecurwfnew}) allows one to compute $V_F(1/x) \mathbf{s}_{k+1}$
in $\mathcal{O}(n)$ operations.
Hence all $n$ columns of matrix $V_F(1/x)\cdot S_{PQ}$
can be computed in $\mathcal{O}(n^2)$.
\end{proof}
\end{lemma}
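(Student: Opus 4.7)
The plan is to mirror the strategy used in Lemma \ref{lemma:qs72} and Lemma \ref{lemma:ss72}, adapting it to the 3-term well-free recurrence. First I would invoke Lemma \ref{lemma:vqvf} to reduce the task to showing that the entries of the product $V_F(1/x) \cdot S_{PQ}$ can be computed in $\mathcal{O}(n^2)$ operations, since the other ingredient $\left(\sum_{k=1}^n d_k D_{1/x}^{k-1}\right)\cdot V_Q(x)$ is a diagonal-times-Vandermonde product that fits the same $\mathcal{O}(n^2)$ budget. The columns $F_0(1/x_i),\ldots,F_{n-1}(1/x_i)$ of $V_F(1/x)$ can be generated by the scalar recurrence (\ref{fkrecur}) in $\mathcal{O}(n^2)$ operations, so by the time we reach the matrix product we already have both $V_F(1/x)$ and the coefficient vectors $\mathbf{s}_k$.

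Next I would invoke Lemma \ref{lemma:skwf} to obtain the three-term recurrence (\ref{skrecurwf}) for the columns $\mathbf{s}_k$ of $S_{PQ}$, and multiply each equation by $V_F(1/x)$ from the left to get a recurrence for the desired columns $V_F(1/x)\mathbf{s}_{k+1}$. This yields the identity
\begin{equation*}
V_F(1/x)\mathbf{s}_{k+1} = \alpha_k V_F(1/x) Z_0 \mathbf{s}_k - \delta_k V_F(1/x)\mathbf{s}_k - \beta_k V_F(1/x) Z_0 \mathbf{s}_{k-1} - \gamma_k V_F(1/x)\mathbf{s}_{k-1},
\end{equation*}
for $k\geq 2$, with an analogous base case for $\mathbf{s}_2$.

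The obstacle, as in the quasiseparable and semiseparable cases, is the two occurrences of $V_F(1/x) Z_0$: naively multiplying by $Z_0$ on the right inside the product would force an extra matrix-vector product at each step. The remedy is Lemma \ref{lemma:vfdisp}, which rewrites $V_F(1/x) Z_0 = D_x V_F(1/x) - \mathbf{1}\,[d_2\;d_3\;\cdots\;d_n\;0]$. Substituting this into each occurrence replaces the troublesome $V_F(1/x) Z_0 \mathbf{s}$ term by a diagonal scaling $D_x (V_F(1/x)\mathbf{s})$ of a vector we have already computed plus a rank-one correction of the form $\mathbf{1}\cdot([d_2\;d_3\;\cdots\;d_n\;0]\mathbf{s})$, whose scalar coefficient costs only $\mathcal{O}(n)$ to evaluate.

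With this substitution each of the updates for $V_F(1/x)\mathbf{s}_{k+1}$ reduces to a constant number of vector additions, a diagonal scaling, and a single inner product with the fixed row $[d_2\;\cdots\;d_n\;0]$, all of which are $\mathcal{O}(n)$. Since only the two previous columns $V_F(1/x)\mathbf{s}_{k-1}$ and $V_F(1/x)\mathbf{s}_k$ are needed to advance to step $k+1$, the full set of $n$ columns of $V_F(1/x)\cdot S_{PQ}$ is produced in $\mathcal{O}(n^2)$ operations. Combined with Lemma \ref{lemma:vqvf}, this gives the claimed $\mathcal{O}(n^2)$ bound on computing the entries of $V_Q(x)\cdot\bigl(\sum_{k=1}^n d_k W_Q^{k-1}\bigr)$.
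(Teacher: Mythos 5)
Your proposal is correct and follows essentially the same route as the paper: reduce via Lemma \ref{lemma:vqvf} to computing $V_F(1/x)\cdot S_{PQ}$, push $V_F(1/x)$ through the three-term recurrence of Lemma \ref{lemma:skwf}, and eliminate the problematic $V_F(1/x)Z_0$ terms with the displacement identity of Lemma \ref{lemma:vfdisp} to get an $\mathcal{O}(n)$ per-column update. No substantive differences to note.
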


\section{Inversion algorithm}
\label{sec:invalgo}
In this section we conclude with a formula for inversion of polynomial Vandermonde-like matrices with $\mathcal{O}(n^2)$ complexity. We summarize our results in Algorithm \ref{algo:inv}, which takes as input recurrence relations of the form
(\ref{eq:ego}), (\ref{eq:semi}) or (\ref{ewfrr}),
matrices $G$ and $B$, and numbers $x_1,\ldots,x_n$.
The output is all elements
of the matrix $R^{-1}$, where $R$ is defined by displacement equation
(\ref{eq:dispwq}).
\begin{algorithm}
\caption{Inversion of quasiseparable-Vandermonde-like matrices}
\label{algo:inv}
\begin{algorithmic}[1]
\State Find recurrence relations for $\widehat Q$ using
Lemma \ref{lemma:hatqs}, \ref{lemma:hatss} or \ref{lemma:hatwf}.
\State Use Algorithm \ref{algo:gepp} to solve $2\alpha$ linear systems
with $R$ in order to compute $c_i$ given by (\ref{eq:cidik})
and $G^T R^{-T}$. Use Lemma \ref{lemma:mnqs}, \ref{lemma:mnss} or \ref{lemma:emnwf} 
to get generators of matrices $M_Q-\xi N_Q$.
\State Compute all elements of $S_{P \widehat Q}$ using
Lemma \ref{lemma:sktk}, \ref{lemma:sktksemi} or \ref{lemma:skwf}.
\State Compute $d_{ik}$ defined by (\ref{eq:cidik})
    via solving $\alpha$ linear systems with $S_{P \widehat Q}$ by
    back substitution.
\State Use Lemma \ref{lemma:qs72}, \ref{lemma:ss72} or \ref{lemma:wf72}
to compute 
$(\sum_{k=1}^{n} d_{ik} (W_{\widehat{Q}}^T)^{k-1}) \cdot V_{\widehat{Q}}^T $
for $i = 1,\ldots,\alpha$.
\State Finally, compute $R^{-1}$ using (\ref{eq:invformula}).
\end{algorithmic}
\end{algorithm}
\bt
Let a system of polynomials $\{Q\}$ be defined by recurrence
relations (\ref{eq:ego}), (\ref{eq:semi}) or (\ref{ewfrr}).
Let $G \in \mathbb C ^{n \times \alpha}$,
    $B \in \mathbb C ^{\alpha \times n}$ be arbitrary matrices
and $x_1,\ldots,x_n$ be arbitrary nonzero numbers.
Let $R \in \mathbb C ^{n \times n}$ be defined by displacement
equation (\ref{eq:dispwq}).
Then the complexity of computing all elements of $R^{-1}$
is no more than $\mathcal{O}(\alpha n^2)$ operations.
\et
\begin{proof}
Consider Algorithm \ref{algo:inv}.
Step 1 can be performed in $\mathcal{O}(n)$ operations.
By Theorem \ref{thm:gepp}, step 2 can be done in $\mathcal{O}(\alpha n^2)$
operations.
By Lemma \ref{lemma:sktk}, \ref{lemma:sktksemi} or \ref{lemma:skwf},
step 3 can be done in $\mathcal{O}(n^2)$.
Step 4 can be done in $\mathcal{O}(\alpha n^2)$ operations, since
$S_{P\widehat Q}$ is a triangular matrix.
By Lemma \ref{lemma:qs72}, \ref{lemma:ss72} or \ref{lemma:wf72},
the complexity of step 5 is no more than $\mathcal{O}(\alpha n^2)$ operations.
Step 6 can be done in $\mathcal{O}(\alpha n^2)$ operations as well.
Therefore, the complexity of Algorithm \ref{algo:inv} is 
$\mathcal{O}(\alpha n^2)$ operations.
\end{proof}


\section{Conclusion}
In this paper we introduced a fast algorithm to compute the inversion of quasiseparable Vandermonde-like matrices with the help of a fast Gaussian elimination algorithm for polynomial Vandermonde-like matrices. To derive the former algorithm we mainly identified the structures of displacement operators $W_{Q}$ for quasiseparable, semiseparable, and well-free polynomials in terms of recurrence relations generators for $M_Q$ and $N_Q$ with $\mathcal{O}(n)$ complexity. We also identified the columns of basis transformations matrices satisfying two-term recurrence relations(for quasiseparable and semiseparable polynomials) and three-term recurrence realtions(for well-free polynomials) and hence the cost of computing entries of the basis transformation matrices have $\mathcal{O}(n^2)$ complexity. We also recognized the recurrence relations for the generalized associated polynomials in terms of generators $\alpha, \beta, \gamma$ and $\delta$ corresponding to quasiseparable, semiseparable and well-free polynomials. By combining all of this we were able to simply derive a fast $\mathcal{O}(n^2)$ inversion algorithm to generalize the results of Quasiseparable Vandermonde matrices to a wider class of Quasiseparable Vandermonde-like matrices.

\end{document}